
\documentclass[12pt, notitlepage]{amsart}
\usepackage{latexsym, amsfonts, amsmath, amssymb, amsthm, cite, tabls, paralist}



\newtheorem{theorem}{Theorem}[section]
\newtheorem{lemma}[theorem]{Lemma}
\newtheorem{proposition}[theorem]{Proposition}

\newtheorem{corollary}[theorem]{Corollary}

\theoremstyle{remark}
\newtheorem{remark}[theorem]{Remark}

\theoremstyle{definition}
\newtheorem{definition}[theorem]{Definition}

\theoremstyle{remark}

\numberwithin{equation}{section}


\newcommand{\Epl}{\overset{E}{+}}

\newcommand{\Z}{\mathbb{Z}}

\newcommand{\R}{\mathbb{R}}

\pagestyle{headings}

\newcommand{\CB}{\mathbf{B}}
\newcommand{\CC}{\mathbf{C}}
\newcommand{\CP}{\mathbf{P}}
\newcommand{\CJ}{\mathcal{J}}

\newcommand{\vol}{\text{vol}}

\oddsidemargin -0.25in \evensidemargin -0.25in \textwidth 6.5in

\sloppy \flushbottom
\parindent 1em

\marginparwidth 48pt \marginparsep 10pt \columnsep 10mm

\usepackage{graphicx}
\usepackage{mathrsfs}

\usepackage{varioref}

\begin{document}

\title{When the sieve works II}
\author{Kaisa Matom\"aki}
\address{Department of Mathematics and Statistics\\ University of Turku \\
20014 Turku \\ Finland}
\email{ksmato@utu.fi}
\thanks{KM was supported by Academy of Finland grants no. 137883 and 138522.}

\author{Xuancheng Shao}
\address{Mathematical Institute\\ Radcliffe Observatory Quarter\\ Woodstock Road\\ Oxford OX2 6GG \\ United Kingdom}
\email{Xuancheng.Shao@maths.ox.ac.uk}
\thanks{XS is supported by a Glasstone Research Fellowship.}

\maketitle

\begin{abstract}
For a set of primes $\mathcal{P}$, let $\Psi(x; \mathcal{P})$ be the number of positive integers $n \leq x$ all of whose prime factors lie in $\mathcal{P}$. In this paper we classify the sets of primes $\mathcal{P}$ such that $\Psi(x; \mathcal{P})$ is within a constant factor of its expected value. This task was recently initiated by Granville, Koukoulopoulos and Matom\"aki \cite{GrKoMa} and their main conjecture is proved in this paper. In particular our main theorem implies that, if not too many large primes are sieved out in the sense that
\[ \sum_{\substack{p \in \mathcal{P} \\ x^{1/v} < p \leq x^{1/u}}} \frac{1}{p} \geq \frac{1 + \varepsilon}{u}, \]
for some $\varepsilon > 0$ and $v \geq u \geq 1$, then
\[ \Psi(x; \mathcal{P}) \gg_{\varepsilon, v} x \prod_{\substack{p \leq x\\ p \notin\mathcal{P}}} \left(1 - \frac{1}{p}\right). \]
\end{abstract}

\section{Introduction}
Let $\mathbb{P}$ be the set of all primes and let $\mathcal{P} \subseteq \mathbb{P}$ be a subset of the primes $\leq x$. We study the most basic sieving problem, wishing to estimate
\[
\Psi(x; \mathcal{P}) := \left|\{ n \leq x \colon p \mid n \implies p \in \mathcal{P}\}\right|.
\]
In other words we sieve the integers in $[1, x]$ by the primes in $\mathcal{P}^c = (\mathbb{P} \cap [1,x]) \setminus \mathcal{P}$. A simple inclusion-exclusion argument suggests that $\Psi(x; \mathcal{P})$ should be approximated by
\[
x \prod_{p \in \mathcal{P}^c} \left(1-\frac{1}{p}\right).
\]
This is always an upper bound, up to a constant, and a lower bound, up to a constant, if $\mathcal{P}$ contains all the primes larger than $x^{1/2-o(1)}$ (see \cite[Theorem 11.13]{Opera} noticing that the sieving limit $\beta = 2$ for $\kappa = 1$). On the other hand there are examples where $\Psi(x; \mathcal{P})$ is much smaller than the expected lower bound. For instance if one fixes $u \geq 1$ and lets $\mathcal{P}$ consist of all the primes up to $x^{1/u}$, then the prediction is about $x/u$ whereas, by an estimate for the number of smooth numbers, we know that $\Psi(x; \mathcal{P}) = \rho(u) x$ with $\rho(u) = u^{-u(1+o(1))} $ as $u \rightarrow \infty$, which is much smaller for large $u$.

The first ones to study what happens if one also sieves out some primes from $[x^{1/2}, x]$ were Granville, Koukoulopoulos and Matom\"aki~\cite{GrKoMa}. They conjectured that the critical issue is what is the largest $y$ such that
\begin{equation}
\label{eq:sum1/pydef}
\sum_{\substack{p \in \mathcal{P} \\ y \leq p \leq x^{1/u}}} \frac{1}{p} \geq \frac{1+\varepsilon}{u}.
\end{equation}
More precisely, they conjectured that when this inequality holds, the sieve works about as expected. On the other hand they gave examples with
\[
\sum_{y \leq p \leq x^{1/u}} \frac{1}{p} = \frac{1-\varepsilon}{u}
\]
such that $\Psi(x; \mathcal{P})$ is much smaller than expected.

Here we continue this study and show that the conjecture indeed holds.
\begin{theorem}
\label{th:MT}
Fix $\varepsilon>0$. If $x$ is large and $\mathcal{P}$ is a subset of the primes $\le x$ for which there are some $1 \leq u \leq v\le \frac{\log x}{1000 \log \log x}$ with
\[
\sum_{\substack{ p\in \mathcal{P} \\ x^{1/v}<p\leq x^{1/u}}} \frac 1p \geq \frac{1+\varepsilon}{u},
\]
then
\[
\frac{\Psi(x;\mathcal{P})}{x} \geq A_v \prod_{p\in \mathcal{P}^c}  \left( 1 -\frac 1p \right),
\]
where $A_v$ is a constant with $A_v = v^{-v(1+o_\varepsilon(1))}$ as $v \to \infty$. If $u$ is fixed, one can take $A_v = v^{-e^{-1/u} v(1+o_\varepsilon(1))}$ as $v \to \infty$.
\end{theorem}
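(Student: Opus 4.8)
The plan is to reduce the general statement to a cleaner sub-problem and then build the required smooth-like numbers by hand. First I would normalize: by dyadically decomposing the range $(x^{1/v}, x^{1/u}]$, the hypothesis $\sum_{p \in \mathcal{P},\ x^{1/v} < p \le x^{1/u}} 1/p \ge (1+\varepsilon)/u$ forces the existence of a single scale $(x^{1/b}, x^{1/a}]$ with $a \le b \le v$, $b/a$ bounded in terms of $\varepsilon$, on which $\mathcal{P}$ still captures a positive proportion $c(\varepsilon)$ of $\sum 1/p$; this lets me replace the global hypothesis with the statement that, for a constant number of ``levels'' $z_j = x^{1/(a s^j)}$ forming a geometric progression up to $x^{1/u}$-ish, the primes of $\mathcal{P}$ in $(z_{j+1}, z_j]$ have reciprocal sum $\ge (1+\varepsilon')/(a s^j)$ or so. The target quantity $\prod_{p \in \mathcal{P}^c}(1 - 1/p)$ is, up to constants, $\asymp 1/\log x$ times $\prod_{p \le x,\ p \notin \mathcal{P}} (1-1/p)^{-1}$ reorganized; more usefully, since we only sieve out primes and Mertens gives $\prod_{p \le x}(1-1/p) \asymp 1/\log x$, it suffices to produce $\gg_v x / \log x \cdot \prod_{p \notin \mathcal{P}, p \le x}(1-1/p)^{-1} \cdot (\text{something})$—but in fact the standard move is to bound $\prod_{p \in \mathcal{P}^c}(1-1/p)^{-1} \le \prod_{p \in \mathcal{P}^c}(1-1/p)^{-1}$ and compare against an explicit construction, so I would instead aim directly: construct a set $S \subseteq \{n \le x : p\mid n \Rightarrow p \in \mathcal{P}\}$ with $|S| \gg_v x \prod_{p \in \mathcal{P}^c}(1-1/p)$.

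The core construction: I would take $n = m \cdot p_1 p_2 \cdots p_k$ where $p_1, \dots, p_k$ are distinct primes from $\mathcal{P}$ lying in the successive levels $(z_{j+1}, z_j]$ (one prime from each of a suitable bounded collection of levels, chosen so that $\prod p_i$ is close to $x^{1 - 1/u}$), and $m \le x/(p_1\cdots p_k)$ ranges over integers all of whose prime factors lie in $\mathcal{P} \cap [1, x^{1/v}]$ — i.e., $m$ is $\mathcal{P}$-smooth at a small scale. Counting $m$ is the fundamental-lemma regime: since the sieving limit for dimension $\kappa = 1$ is $\beta = 2$ and here the smoothness level $x^{1/v}$ has $v$ comfortably below $\log x / \log\log x$, the count of such $m$ up to $y$ is $\gg_v y \prod_{p \le x^{1/v},\ p \notin \mathcal{P}}(1-1/p)$ provided $y \ge (x^{1/v})^{2+\delta}$, by Theorem 11.13 of \cite{Opera} applied to the set $\mathcal{P} \cap [1, x^{1/v}]$. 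Summing $1/(p_1 \cdots p_k)$ over the admissible tuples from the levels, using the hypothesis that each level contributes reciprocal mass $\ge (1+\varepsilon)/(\text{level parameter})$, produces a factor that, after accounting for the $k!$ overcounting and the constraint $\prod p_i \approx x^{1-1/u}$, gives total size $\gg_v x \prod_{p \le x^{1/v}, p \notin \mathcal{P}}(1 - 1/p) \cdot \prod_{x^{1/v} < p \le x, p \notin \mathcal{P}}(1-1/p)$, the second product being harmless since there we only forfeit primes above $x^{1/v}$ and $\prod_{x^{1/v} < p \le x}(1-1/p) \asymp 1/v$. Tracking constants through the geometric progression of levels is exactly where the $A_v = v^{-v(1+o(1))}$ shape emerges, with the improved $v^{-e^{-1/u}v(1+o(1))}$ when $u$ is fixed coming from a sharper count of how many primes one can afford to pack into the top level $(x^{1/v}, x^{1/(v-1)}]$ versus the rest.

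The hard part will be the sum over prime tuples with the multiplicative constraint $x^{1-1/u - o(1)} \le p_1 \cdots p_k \le x^{1-1/u}$: this is not a free sum, and controlling it requires showing that the reciprocal mass $\sum 1/p$ over $\mathcal{P}$ in each level is not merely large in total but ``well-distributed'' enough that one can choose one prime per level with the product landing in the target window — essentially a convolution/Rankin-type argument where the Buchstab-style recursion interacts with the lower bound hypothesis \eqref{eq:sum1/pydef}. I expect this to be handled by introducing a parameter $s > 1$ for the ratio between consecutive levels, setting up a recursive inequality for the quantity $\Psi_j := \Psi(x; \mathcal{P} \cap [1, z_j]) / (x \prod_{p \le z_j, p \notin \mathcal{P}}(1-1/p))$ along the levels, and showing the hypothesis makes this recursion non-decaying down to level $u$ — the delicate point being to keep the implied constants from degrading faster than $v^{-v}$, which forces $s$ to be taken close to $1$ and the number of levels to grow like $v\log v$, consistent with the claimed constant. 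An alternative to the explicit tuple sum, which I would fall back on if the direct count is unwieldy, is to run the Granville–Koukoulopoulos–Matomäki iteration from \cite{GrKoMa} with the critical inequality \eqref{eq:sum1/pydef} feeding each step, proving by induction on the number of levels that the sieve ratio stays bounded below by the appropriate power of $v$.
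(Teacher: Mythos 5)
Your outer framework --- writing $n = m \cdot p_1 \cdots p_k$ with $m$ composed of primes of $\mathcal{P}$ below $x^{1/v}$ and $p_1,\ldots,p_k$ from $\mathcal{P}\cap(x^{1/v},x^{1/u}]$, counting the $m$ by the fundamental lemma of sieve theory, and summing over tuples of large primes whose product lands in a window near $x$ --- is essentially the paper's reduction of Theorem~\ref{th:MT} to Hypothesis P in Section~\ref{ssec:HypA->Thm1}, and you correctly identify the genuine difficulty: obtaining a lower bound of the shape $v^{-o(v)} x/(v^k\log x)$ for the number of $k$-tuples $(p_1,\ldots,p_k)\in\mathcal{P}^k$ with $x/2 \le p_1\cdots p_k\le x$. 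But the proposal has a real gap at exactly this step. You assert that the hypothesis on $\sum 1/p$ forces $\mathcal{P}$ to place reciprocal mass at least $(1+\varepsilon')/(as^j)$ in each of several geometric levels, so that the product can be assembled by choosing one prime per level. This is not a consequence of the hypothesis: $\mathcal{P}$ may concentrate all of its reciprocal mass in $(x^{1/v},x^{1/u}]$ into a single narrow scale, and the paper's reduction to Hypothesis A* deliberately passes to exactly such a scale (a set $A\subset(\lambda N/u,N/u]$). In that regime one must choose $k\asymp u$ primes from essentially one scale and hit a multiplicatively narrow window, which is not a Buchstab/Rankin question but an additive-combinatorial one. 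The fallbacks you mention --- a Buchstab-style recursion, or running the Granville--Koukoulopoulos--Matom\"aki iteration --- do not close the gap: as the paper notes, that iteration proves the needed tuple count (Hypothesis A) only for $\lambda$ a large constant, hence Theorem~\ref{th:MT} only for $\varepsilon$ a large constant, not for every $\varepsilon>0$.

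The paper's mechanism for the tuple count is missing from your sketch entirely. After discretizing the primes into logarithmic blocks of ratio $\rho = 1 + (\lambda/1000v)^2$, the question becomes: given $A\subseteq(N/v,N/u]$ with $\sum_{a\in A}1/a\ge(1+\lambda)/u$, show that some $t\in(N-k,N]$ has $\gg v^{-o(v)}|A|^k/N$ representations as a $k$-fold sum of elements of $A$. The existence of one such representation is Bleichenbacher's theorem (Proposition~\ref{prop:DiscBlei}); the quantitative count is what requires the real work, via Green's arithmetic removal lemma (Theorem~\ref{thm:arith-rem}) for bounded $k$ and a new removal lemma for popular sums with a growing number of summands (Theorem~\ref{thm:pop-rem-largek}) when $k$ grows with $v$, the latter proved using Freiman's theorem, a filling lemma, the Shapley--Folkman theorem, and lattice-point estimates in convex bodies. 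None of these ingredients --- which constitute the bulk of the paper and its main novelty --- appear in the proposal, and without them (or a substitute of comparable strength) the argument does not reach the claimed statement for all $\varepsilon>0$.
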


Notice that when $\mathcal{P}$ consists of all the primes $ \leq x^{e^{(1+2\varepsilon)/u}/v}$, the conditions of the theorem are satisfied and an estimate for smooth numbers shows that
\[
\frac{\Psi(x;\mathcal{P})}{x} = v^{-v(e^{-(1+2\varepsilon)/u}+o(1))}
\]
and hence the dependence of the constant $A_v$ on $v$ is close to best possible.

Theorem~\ref{th:MT} establishes the main conjecture of~\cite{GrKoMa}. Granville, Koukoulopoulos and Matom\"aki~\cite[Sections 3--4]{GrKoMa} have reduced a slightly weaker form of the conjecture to an additive combinatorial problem similar to the following hypothesis. We will deduce Theorem~\ref{th:MT} from Hypothesis A in Section~\ref{ssec:HypA->Thm1}.

\theoremstyle{plain}
\newtheorem*{hypa}{Hypothesis A}
\begin{hypa}
Fix $\lambda \in (0,1)$. Let $N \geq v \geq u \geq 1$ be such that $N \geq (100v/\lambda)^2$. If $A$ is a subset of the integers in $(\frac{N}{v},\frac{N}{u}]$ such that
\[
\sum_{\substack{ a\in A  }}  \frac 1a \geq \frac {1+\lambda}{u},
\]
then there exists an integer $k\in[u, v]$ such that
\[
|\{(a_1,\dots,a_k)\in A^k \colon N-k < a_1+\dotsb+a_k \leq N\}| \geq \alpha_v \frac{|A|^{k}}{N},
\]
where $\alpha_v$ is a constant with $\alpha_v = v^{-o_\lambda(v)}$ as $v \to \infty$. If $u$ is fixed and $v \geq 1000u^2/\lambda^2$, one can take $k \leq e^{-1/u} v$. 
\end{hypa}

Furthermore Granville, Koukoulopoulos and Matom\"aki~\cite{GrKoMa} proved (a slight variant of) Hypothesis A for some large constant $\lambda$ and $\alpha_v = v^{-O(v)}$ which implies Theorem~\ref{th:MT} for some large constant $\varepsilon$. Here we will prove Hypothesis A for every $\lambda > 0$ which implies Theorem~\ref{th:MT} for every $\varepsilon > 0$. 

A crucial ingredient is the following result of Bleichenbacher \cite{Bleichenbacher} (see \cite[Section 9]{LePo} for the proof) which may be viewed as a qualitative continuous variant of Hypothesis A.

\theoremstyle{plain}
\newtheorem*{bbthm}{Bleichenbacher's Theorem}

\begin{bbthm}  If $u\geq 1$ and  $T$ is an open subset of $(0,\frac{1}{u})$ for which
\[
\int_{t\in T}\frac {dt}{t} > \frac 1u,
\]
then there exist $t_1,t_2,\dots, t_k\in T$ for which $t_1+t_2+\dotsb+t_k=1$.
\end{bbthm}

Actually we will use the following discrete variant of Bleichenbacher's theorem which is a qualitative variant of Hypothesis A.
\begin{proposition}[Discrete Bleichenbacher]
\label{prop:DiscBlei}
Let $N \geq u \geq 1$ and let $A \subseteq \{1, \dotsc, \lfloor N/u \rfloor\}$ be such that
\[
\sum_{a \in A} \frac{1}{a} > \frac{1}{u} + \frac{1}{\sqrt{N} - 1}.
\]
Then there exists $a_1,\dotsc ,a_k \in A$ such that $N-k < a_1+\dotsb+a_k \leq N$.
\end{proposition}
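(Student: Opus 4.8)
The plan is to deduce the proposition from the continuous Bleichenbacher's Theorem by discretizing. To each $a\in A$ we attach an open subinterval $I_a\subseteq(0,1/u)$, the $I_a$ pairwise disjoint, so that $T:=\bigcup_{a\in A}I_a$ satisfies $\int_T\frac{dt}{t}=\sum_{a\in A}\int_{I_a}\frac{dt}{t}>\frac1u$; Bleichenbacher then produces $t_1,\dots,t_k\in T$ with $t_1+\dots+t_k=1$, and writing $t_i\in I_{a_i}$ we read off the size of $a_1+\dots+a_k$ from the endpoints of the $I_{a_i}$.

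First I would dispose of two configurations directly. If $1\in A$, take $a_1=\dots=a_N=1$. If $A$ contains coprime elements $p\le q$ with $pq\le N$, then $N$ exceeds their Frobenius number $pq-p-q$ and hence lies in the semigroup they generate, giving a representation $N=a_1+\dots+a_k$ with $k\ge1$. So assume neither holds; in particular $1\notin A$, so the integrals below are finite. Write $m:=\lfloor N/u\rfloor$, so $A\subseteq\{1,\dots,m\}$.

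The crux is a tension between two natural interval choices. With $I_a:=(\frac{a-1}{N},\frac aN)$ one has $\int_{I_a}\frac{dt}{t}=\log\frac{a}{a-1}>\frac1a$, so the mass hypothesis holds with room to spare and $T\subseteq(0,1/u)$ (as $\frac{\max A}{N}\le\frac mN\le\frac1u$); but from $\frac{a_i-1}{N}<t_i<\frac{a_i}{N}$ one gets only $N<a_1+\dots+a_k<N+k$, an overshoot of size less than $k$. With $I_a:=(\frac aN,\frac{a+1}{N})$ instead --- which is contained in $(0,1/u)$ exactly when $a\le N/u-1$, so that only the single element $a=m$ of $A$ can be unusable --- one gets $N-k<a_1+\dots+a_k<N$, exactly the conclusion, but now $\int_{I_a}\frac{dt}{t}=\log(1+\frac1a)<\frac1a$ with a per‑element deficit $\frac1a-\log(1+\frac1a)$ of size about $\frac1{2a^2}$. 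The resolution is a hybrid: when $m>L$, with $L:=\lceil 2\sqrt N\,\rceil$, use the first (overshooting) intervals for the small elements $a\le L$ and the second for $a>L$. These intervals are pairwise disjoint and lie in $(0,1/u)$, and since the surplus $\log\frac{a}{a-1}-\frac1a$ on the small elements is nonnegative, $\int_T\frac{dt}{t}\ge\sum_{a\in A}\frac1a-\frac1m-\sum_{a\in A,\,a>L}\frac1{2a^2}>\sum_{a\in A}\frac1a-\frac1L-\frac1{2L}>\frac1u$ by the hypothesis. Bleichenbacher now yields $a_1,\dots,a_k$, and if $k_{\mathrm{sm}},k_{\mathrm{lg}}$ count the parts with $a_i\le L$ and $a_i>L$, the endpoint bounds give $N-k_{\mathrm{lg}}<a_1+\dots+a_k<N+k_{\mathrm{sm}}$. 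If $k_{\mathrm{sm}}\le1$, or already $a_1+\dots+a_k\le N$, we are done; otherwise delete a minimal initial segment of the small parts until the running sum first drops to $\le N$, and check that the number of parts left still exceeds $N$ minus the new sum. When instead $m\le L$, all of $A$ consists of small elements: run the first interval construction on all of $A$, obtain $N<a_1+\dots+a_k<N+k$, and delete the smallest parts one at a time until the sum drops to $\le N$, using that each part is $\le m$ to check that at least $\sqrt N$ parts remain --- which forces the resulting sum into $(N-k',N]$.

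The hard part will be the deletion step together with keeping the thresholds consistent: one must verify that after discarding the few overshooting parts the number of remaining parts exceeds the amount by which the sum has fallen below $N$, and this is precisely where the scale $\sqrt N$ in the hypothesis $\sum_{a\in A}\frac1a>\frac1u+\frac1{\sqrt N-1}$ is needed. I also expect the borderline regime where $m$ is of order $\sqrt N$ --- between the case of $A$ entirely below $L$ and the hybrid --- to require a slightly sharper deletion bound or a one‑off argument, for which the hypothesis still leaves just enough room.
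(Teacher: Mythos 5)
Your outline correctly identifies the main device (discretize via intervals and invoke Bleichenbacher), but it goes down a much harder road than necessary, and the part you yourself flag as "the hard part" is a genuine gap. The deletion step is not established: after Bleichenbacher you propose to remove small summands (those $\le L \approx 2\sqrt N$) one at a time until the sum first drops to at most $N$, at which point the new sum exceeds $N-L$; to conclude you then need the number $k'$ of surviving parts to satisfy $k' > N - (\text{new sum})$, i.e.\ essentially $k' \ge L \approx 2\sqrt N$. Nothing in your setup guarantees this. The only a priori lower bound on the number of Bleichenbacher parts is $k>u$ (since each $t_i < 1/u$), and for the parts that survive you only control $\sum_{\text{surviving}} t_i < 1$, which gives no lower bound on the remaining count. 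One can easily imagine Bleichenbacher returning, say, a handful of large parts plus a modest number of small parts, with most of the small parts deleted, leaving $k'$ far below $\sqrt N$. You would then need the endpoint bound $\sum_{\text{lg}} a_i > N\sum_{\text{lg}} t_i - k_{\mathrm{lg}}$, but $\sum_{\text{lg}} t_i$ can be well below $1$, so this does not close. Your own caveat about a "borderline regime" and a "one-off argument" confirms the proof is not complete.

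The paper avoids all of this with one observation you did not make: if $A$ contains \emph{any} element $a < \sqrt N$, the conclusion is immediate by taking $k = \lfloor N/a\rfloor$ copies of that single element, since the interval $(N-\sqrt N, N]$ has length $\sqrt N > a$ and hence contains a multiple $ka$, and then $k > \sqrt N - 1 \ge a - 1$ gives $N - k < N - a < ka$. (Your two disposals --- $1\in A$, or a coprime pair with small product --- handle only special instances of this; e.g.\ $A$ consisting of even numbers with a small even element is covered by neither.) Once every $a\in A$ is at least $\sqrt N$, the \emph{single} interval choice $I_a=(a/N,(a+1)/N)$ suffices: the total loss $\sum_{a\in A}\bigl(\tfrac1a - \log(1+\tfrac1a)\bigr) \le \sum_{a\ge \lceil\sqrt N\rceil} a^{-2} < \tfrac1{\sqrt N - 1}$ is exactly what the slack in the hypothesis was designed to absorb, and the endpoint bounds then give $N-k < a_1+\cdots+a_k < N$ directly with no deletion needed. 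I recommend replacing the hybrid/deletion construction with this dichotomy.
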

\begin{proof}
Notice first that the claim follows trivially if there is $a \in A$ such that $a < \sqrt{N}$ since in this case there is $k \geq \sqrt{N}$ such that $N-\sqrt{N} < k a \leq N$. Hence we can assume that $A \subseteq \{\lceil \sqrt{N} \rceil, \lceil \sqrt{N} \rceil + 1, \dotsc, \lfloor N/u \rfloor\}$.

Define $T = \bigcup_{a \in A} \left(\frac{a}{N}, \frac{a+1}{N}\right)$ so that
\[
\begin{split}
\int_{t \in T} \frac{dt}{t} &= \sum_{a \in A} \int_{a/N}^{(a+1)/N} \frac{dt}{t} = \sum_{a \in A} \log\left(1+\frac{1}{a}\right) \geq \sum_{a \in A} \left(\frac{1}{a} - \frac{1}{a^2}\right) \\
&> \frac{1}{u} + \frac{1}{\sqrt{N}-1} - \sum_{a > \lceil\sqrt{N}\rceil} \frac{1}{a^2} > \frac{1}{u}.
\end{split}
\]
Then Bleichenbacher's theorem implies that there are $t_1, \dotsc, t_k \in T$ such that $t_1 + \dotsb + t_k = 1$. For each $j$ there is $a_{i_j}$ such that $t_j \in (a_{i_j}/N, (a_{i_j}+1)/N)$. But then $N-k < a_{i_1} + \dotsb + a_{i_k} < N$.
\end{proof}



The proof of Hypothesis A splits into two cases according to whether much of the set $A$ is contained in $[N/u_0, N/u]$ for some $u_0 = O(1)$ or not. In the first case Hypothesis A follows from an arithmetic removal lemma in a straightforward way, whereas in the second case we develop an analogue of the arithmetic removal lemma with a growing number of variables (see Theorem \ref{thm:pop-rem-largek} below), which could be of independent interest. 

\subsection*{Acknowledgements}  
This work started when both authors were visiting CRM in Montreal during the analytic part of the thematic year in number theory in Fall 2014, whose hospitality is greatly appreciated. Thanks also to Ben Green for helpful discussions.

\section{Some initial reductions}
\label{sec:initred}
\subsection{Deduction of Theorem~\ref{th:MT} from Hypothesis A}
\label{ssec:HypA->Thm1}
As in~\cite{GrKoMa}, we first reduce proving Theorem~\ref{th:MT} to proving a variant of Hypothesis A for the primes called Hypothesis P, and then show that Hypothesis A implies Hypothesis P.

\theoremstyle{plain}
\newtheorem*{hypp}{Hypothesis P}

\begin{hypp}
Fix $\lambda \in (0,1)$. If $x$ is large, $1 \leq u \leq v\le \frac{\log x}{999 \log \log x}$ and $\mathcal{P}$ is a subset of the primes in $(x^{1/v}, x^{1/u}]$ for which 
\[
\sum_{\substack{ p\in \mathcal{P}  }}  \frac 1p \geq \frac {1+\lambda}{u} ,
\]
then there exists an integer $k\in[u, v]$ such that
\[
\left|\left\{(p_1,\dots,p_k)\in \mathcal{P}^k \colon \frac{x}{2} \leq p_1 \dotsm p_k \leq x\right\}\right| \geq \pi_v \cdot \frac{x}{v^k \log x},
\]
where $\pi_v$ is a constant with $\pi_v = v^{-o_\lambda(v)}$ for $v \to \infty$. If $u$ is fixed and $v \geq 1000u^2 /\lambda^2$, one can take $k \leq e^{-1/u} v$. 
\end{hypp}

\begin{proof}[Proof that Hypothesis P implies Theorem \ref{th:MT}] 
We can clearly assume that $\varepsilon < 1/1000$. Let $\mathcal{A}=\mathcal{P}\cap[1,x^{1/v}]$ and $\mathcal{B}=\mathcal{P}\cap(x^{1/v},x^{1/u}]$ so that
\[
\Psi(x;\mathcal{P})\geq \Psi(x;\mathcal{P}\cap [1, x^{1/u}]) \geq \sum_{\substack{ a\leq x^{\varepsilon/(5v)} \\p|a\implies p\in \mathcal{A}}}\Psi(x/a;\mathcal{B}),
\]
since we can write any $n$ composed only of prime factors from $\mathcal{P} \cap [1, x^{1/u}]$ as $n=ab$ where $a$ and $b$ are composed only of prime factors from $\mathcal{A}$ and $\mathcal{B}$, respectively.
For each  $a\le x^{\varepsilon/(5v)}$, we have that
\[
\sum_{\substack{ p\in \mathcal{B} \\(x/a)^{1/v}<p\leq (x/a)^{1/u}}}\frac{1}{p}\ge\sum_{\substack{ p\in \mathcal{P} \\x^{1/v}<p\leq x^{1/u}}}\frac{1}{p} - \sum_{\substack{ p\in \mathbb{P} \\(x/a)^{1/u} <p\leq x^{1/u}}}\frac{1}{p}
\]
If $(x/a)^{1/u} > x^{1/u} - x^{2/(3u)}$, a trivial estimate gives
\[
\sum_{\substack{ p\in \mathbb{P} \\(x/a)^{1/u} <p\leq x^{1/u}}}\frac{1}{p} \leq 
\frac{2x^{2/(3u)}}{x^{1/u}/2}  = 4 x^{-1/3u} \leq \frac{\varepsilon}{2u}.
\] 
Otherwise, Huxley's prime number theorem for short intervals (see e.g. Theorem 10.5 in \cite{IwKo04} and the subsequent discussion) yields, once $x$ is large enough,
\[
\sum_{\substack{ p\in \mathbb{P} \\(x/a)^{1/u} <p\leq x^{1/u}}}\frac{1}{p} \leq 2\log \frac{\log x^{1/u}}{\log (x/a)^{1/u}} \leq - 2 \log \left(1-\frac{\varepsilon}{5v}\right) \leq \frac{\varepsilon}{2u}.
\]

Hence, in any case,
\[
\sum_{\substack{ p\in \mathcal{B} \\(x/a)^{1/v}<p\leq (x/a)^{1/u}}}\frac{1}{p}\ge \frac{1+\varepsilon/2}{u},
\]
and applying Hypothesis P to the set $\mathcal{B}$ yields that there exists $k \in [u, v]$ such that
\[
\Psi(x/a;\mathcal{B}) \geq \pi_v \cdot \frac x{av^k \log x},
\]
where $\pi_v = v^{-o_\varepsilon(v)}$ for $v \to \infty$.
Consequently
\[
\frac{\Psi(x;\mathcal{P})}x\gg \pi_v \frac{1}{v^k \log x}\sum_{\substack{  a \leq x^{\varepsilon/5v} \\p|a\implies p\in \mathcal{A}}}\frac{1}{a}
\gg\frac{\pi_v}{v^k} \prod_{\substack{ p \leq x^{\varepsilon/5v} \\ p\in\mathcal{P}^c }} \left(1-\frac1p\right) \gg \frac{\pi_v}{v^k} \prod_{p\in\mathcal{P}^c } \left(1-\frac1p\right)
\]
by~\cite[Lemma 2.1]{GrKoMa}. This gives the desired lower bound since $k \leq v$, and in case $u$ is fixed, $k \leq e^{-1/u} v$ for large enough $v$.
\end{proof}

\begin{proof}[Proof that Hypothesis A  implies Hypothesis P]
Let $\rho = 1+\left(\frac{\lambda}{1000v}\right)^2$ and $N=\log_{\rho} x-v$. Define, for $j \geq 0$,
\[
A_j = \Bigl\{a \in (N/v , N/u] \colon \sum_{\substack{p \in \mathcal{P} \\ \rho^a \leq p < \rho^{a+1}}} \frac{1}{p} \geq \frac{e^{-j}}{a}\Bigr\}.
\]

Let $J_0 = \log \frac{20 v \log v}{\lambda}$ and let $j_0$ be the smallest integer $j \geq 0$ for which 
\begin{equation}
\label{eq:j0def}
\sum_{a \in A_j} \sum_{\substack{p \in \mathcal{P} \\ \rho^a \leq p < \rho^{a+1}}} \frac{1}{p} \geq \frac{1+\frac{\lambda}{3} + \frac{\lambda}{3} \cdot \frac{j}{J_0}}{u}.
\end{equation}
Notice that, since
\[
\sum_{a \in A_{J_0}} \sum_{\substack{p \in \mathcal{P} \\ \rho^a \leq p < \rho^{a+1}}} \frac{1}{p} \geq \sum_{\substack{p \in \mathcal{P} \\ x^{1/v} < p \leq x^{1/u}}} \frac{1}{p}  - \sum_{N/v < a \leq N/u} \frac{e^{-J_0}}{a} - \sum_{x^{1/u} \rho^{-v/u} \leq p \leq x^{1/u}} \frac{1}{p} \geq \frac{1+2\lambda/3}{u},
\]
necessarily $j_0 \leq J_0$. Write $A = A_{j_0}$. Then, by Huxley's prime number theorem in short intervals,
\begin{equation}
\label{eq:asumA->P}
\sum_{a \in A} \frac{1}{a} \geq \sum_{a \in A} \log\left(1+\frac{1}{a}\right) \geq (1-\lambda/100) \sum_{a \in A_{j_0}} \sum_{\substack{p \in \mathcal{P} \\ \rho^a \leq p < \rho^{a+1}}} \frac{1}{p} \geq \frac{1+\frac{\lambda}{4}}{u}.
\end{equation}
Furthermore, since $j_0$ was chosen to be the smallest integer for which~\eqref{eq:j0def} holds, we get that
\[
\sum_{\substack{a \in A_{j_0}}} \frac{e^{-j_0+1}}{a} \geq \sum_{\substack{a \in A_{j_0} \setminus A_{j_0 - 1}}}  \sum_{\substack{p \in \mathcal{P} \\ \rho^a \leq p < \rho^{a+1}}} \frac{1}{p} \geq \frac{\lambda}{3J_0 u},
\]
so that
\[
|A| \cdot e^{-j_0} \geq  \frac{N}{ev} \cdot \frac{\lambda}{3J_0 u} \gg \frac{\lambda}{v \log v} \cdot \frac{N}{u}.
\]

By~\eqref{eq:asumA->P}, we can apply Hypothesis A to the set $A$ which gives that, for some $k \leq v$ (or in case $u$ is fixed and $v \geq 1000u^2/\lambda^2$, $k \leq e^{-1/u} v$),
\[
|\{(a_1,\dots,a_k)\in A^k \colon  N-k < a_1+\dots+a_k \leq N \}| \geq \alpha_v \frac{|A|^k}{N},
\]
where $\alpha_v = v^{-o_\lambda(v)}$.

Now for each solution to $a_1+\dotsb+a_k \in (N-k, N]$ with
$a_1,\dots,a_k\in A$, consider the primes $p_j\in \mathcal{P}$ with $\rho^{a_j} \leq p_j < \rho^{a_j+1}$. Note that for such primes $p_1 \dotsm p_k \leq \rho^{N+k} \leq x$ and
\[
p_1 \dotsm p_k \geq \rho^{N-k} \geq x \rho^{-2v} = x \left(1 + \left(\frac{\lambda}{1000 v}\right)^2\right)^{-2v} \geq x/2.
\]
Hence
\[
\begin{split}
&\frac{1}{x}|\{(p_1,\dots,p_k)\in \mathcal{P}^k \colon x/2 \leq p_1 \dotsm p_k \leq x\}| \\
&\geq  \frac{1}{2}\sum_{\substack{(p_1,\dots,p_k)\in \mathcal{P}^k \\ x/2 \leq p_1 \dotsm p_k \leq x}} \frac{1}{p_1 \dotsm p_k} \geq \frac{1}{2}\sum_{\substack{  (a_1,\dots,a_k)\in A^k\\  N-k < a_1+\dots+a_k \leq N}} \prod_{i=1}^k \sum_{\substack{p \in \mathcal{P} \\ \rho^{a_i} \leq p < \rho^{a_i+1}}} \frac{1}{p} \\
&\geq  \frac{1}{2}\sum_{\substack{  (a_1,\dots,a_k)	\in A^k\\   N-k < a_1+\dots+a_k \leq N}} \frac{e^{-kj_0}}{a_1 \dotsm a_k} \geq \frac{1}{2}\frac{e^{-kj_0}}{(N/u)^k} \sum_{\substack{  (a_1,\dots,a_k)	\in A^k\\   N-k < a_1+\dots+a_k \leq N}} 1 \\
&\geq \frac{\alpha_v}{2N} \cdot  \left(\frac{e^{-j_0} |A|}{N/u}\right)^k \geq \frac{\alpha_v}{v^{2} \log x} \cdot \frac{1}{e^{O_\lambda(k)} (v \log v)^k} \geq \frac{\pi_v}{v^k \log x},
\end{split}
\]
where $\pi_v = \frac{\alpha_v}{v^{2} e^{O_\lambda(k)}(\log v)^{v}} = v^{-o_\lambda(v)}$ as $v \to \infty$.
\end{proof}

\subsection{Reduction of Hypothesis A to Hypothesis A*}
In this section we reduce Hypothesis A (except for the last claim concerning the case $u$ is fixed) into a variant where $u \asymp v$. Let $u_0 = 3/\lambda$. We claim that, under the assumptions of Hypothesis A, there is some $j$ such that
\[
\sum_{\substack{a \in A \\ N/(u_0^{j+1} u) < a \leq N/(u_0^{j} u)}} \frac{1}{a} \geq \frac{1+\lambda/3}{u_0^j u}.
\]
This follows since otherwise, summing over $j \geq 0$ we get that
\[
\sum_{\substack{a \in A}} \frac{1}{a} < \frac{1+\lambda/3}{u(1-1/u_0)} < \frac{1+\lambda}{u}
\]
which is a contradiction. Hence, Hypothesis A, except for the last claim concerning the case $u$ is fixed (which will be proved in Section~\ref{sec:ufixed}), follows if we prove the claim when $A \subseteq (\lambda N/ u, N/u]$, i.e. if we prove the following hypothesis.
\theoremstyle{plain}
\newtheorem*{hypa2}{Hypothesis A*}
\begin{hypa2}
Fix $\lambda \in (0, 1)$. There exists a constant $c = c(\lambda)$ such that the following holds. Let $N \geq u \geq 1$ be such that $N \geq (10u/\lambda)^2$. Let $A$ be a subset of the integers in $(\lambda \frac{N}{u},\frac{N}{u}]$ such that
\[
\sum_{\substack{ a\in A  }}  \frac 1a \geq \frac {1+\lambda}{u}.
\]
Then there exists an integer $k\in[u, u/\lambda]$ such that
\[
|\{(a_1,\dots,a_k)\in A^k \colon N-k < a_1+\dotsb+a_k \leq N\}| \ge \alpha_{k,u} \frac{|A|^{k}}{N},
\]
where $\alpha_{k,u}$ is a constant with $\alpha_{k,u} = (c/\log u)^k$ as $u \to \infty$. 
\end{hypa2}

\section{Proving Hypothesis A*: an outline}
\label{sec:A*outline}

Our main goal has become to prove Hypothesis A*, a quantitative variant of Proposition~\ref{prop:DiscBlei}, concerning the number of solutions to $a_1 + \cdots + a_k = t$ for some fixed $t$. In Section~\ref{sec:A*outline1} we state some removal-type results in this spirit. When the number of variables $k$ is bounded, this follows from an arithmetic regularity lemma of Green \cite{Gr05}. However, when $k$ grows, the situation becomes different and we will prove the substitute Theorem \ref{thm:pop-rem-largek} in Sections~\ref{sec:fill}--\ref{sec:proofofpopremlaagek}. Hypothesis A* will be deduced from these results in Sections~\ref{sec:A*outline2} and~\ref{sec:A*outline3}.

\subsection{An arithmetic regularity lemma for popular sums}\label{sec:A*outline1}

An important tool in graph theory is the triangle removal lemma, which can be proved using Szemer\'{e}di's regularity lemma. Green \cite{Gr05} developed an arithmetic version of the regularity lemma, and deduced as a consequence a removal lemma in the arithmetic setting.

\begin{theorem}[Arithmetic removal lemma] \label{thm:arith-rem}
Let $k \geq 3$ be a positive integer. Let $G$ be a finite abelian group with $|G| = N$, and let $A_1,\cdots,A_k \subseteq G$ be subsets. For any $\eta>0$ there exists a positive constant $\delta = \delta(k,\eta) > 0$ such that the following statement holds. If the number of solutions to $a_1+\cdots+a_k = 0$ with $a_i \in A_i$ for all $i$ is at most $\delta N^{k-1}$, then for each $i$ there exists a subset $A_i' \subseteq A_i$ with $|A_i \setminus A_i'| \leq \eta N$, such that there is no solution to $a_1' + \cdots + a_k' = 0$ with $a_i' \in A_i'$ for all $i$.
\end{theorem}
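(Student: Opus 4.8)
Since the statement is attributed to Green in the preceding text, the simplest route in the paper is to cite \cite{Gr05} directly, where this is proved as a corollary of an arithmetic regularity lemma; below I sketch a self-contained argument via the graph removal lemma, since the reduction is short and the only nontrivial point is easy to isolate. Encode solutions as cycles in an auxiliary graph: let $\Gamma$ be the $k$-partite graph on vertex classes $V_1,\dots,V_k$, each a labelled copy of $G$, where for $1\le i\le k-1$ we join $v\in V_i$ to $w\in V_{i+1}$ exactly when $w-v\in A_i$, and we join $v\in V_1$ to $w\in V_k$ exactly when $v-w\in A_k$. A ``transversal'' copy of $C_k$ in $\Gamma$ (one vertex $v_i\in V_i$ for each $i$, all $k$ consecutive edges present) corresponds, on setting $a_i=v_{i+1}-v_i$ for $i<k$ and $a_k=v_1-v_k$, to a solution of $a_1+\dots+a_k=0$ with $a_i\in A_i$, and conversely each solution arises from exactly $N$ such transversals (translate $v_1$ over $G$). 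Hence $\Gamma$ has exactly $N\cdot\#\{\text{solutions}\}\le\delta N^k$ transversal copies of $C_k$.

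By the graph removal lemma (with $k\ge 3$ fixed; for even $k$ one should work with the ``rainbow''/transversal version, which is a routine consequence), choosing $\delta=\delta(k,\eta)$ small enough there is an edge set $D$ with $|D|\le\eta' N^2$, for a suitable $\eta'=\eta'(k,\eta)$, whose removal destroys every transversal $C_k$. The point I expect to be the main obstacle is that $D$ is an arbitrary set of edges, whereas the conclusion requires us to delete \emph{structured} sets of edges — namely whole ``colour classes'' corresponding to removing individual elements from the $A_i$. The fix is to symmetrise over the translation action of $G$ on $\Gamma$ (simultaneously shifting every $V_i$ by $t\in G$): this action is an automorphism of $\Gamma$, it fixes the colour (i.e.\ the value $w-v\in A_i$) of each edge, and it acts simply transitively on each colour class, which has exactly $N$ edges. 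For each colour $a_i\in A_i$ let $p(a_i)\in[0,1]$ be the proportion of its $N$ edges lying in $D$; then $\sum_i\sum_{a_i\in A_i}p(a_i)=|D|/N\le\eta' N$, so setting $A_i'=\{a_i\in A_i:p(a_i)<\tfrac{1}{2k}\}$ gives $|A_i\setminus A_i'|\le 2k\eta' N$ for each $i$, which is $\le\eta N$ once $\eta'$ (hence $\delta$) is chosen appropriately.

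It remains to check that no solution of $a_1+\dots+a_k=0$ has all $a_i\in A_i'$; by the correspondence above it is enough to show the subgraph $\Gamma'$ of $\Gamma$ spanned by the colours in $A_1'\cup\dots\cup A_k'$ has no transversal $C_k$. Suppose it did, with edges $e_1,\dots,e_k$. For every $t\in G$ the translated deletion $t\cdot D$ also destroys all transversal $C_k$'s of $\Gamma$ (translation being an automorphism), so it meets $\{e_1,\dots,e_k\}$; counting pairs $(t,j)$ with $e_j\in t\cdot D$ gives at least $N$ such pairs. On the other hand, for fixed $j$ the set $\{t:e_j\in t\cdot D\}$ is in bijection with the edges of colour $\mathrm{col}(e_j)$ lying in $D$, of which there are $p(\mathrm{col}(e_j))N<\tfrac{1}{2k}N$ since $\mathrm{col}(e_j)\in A_j'$; summing over $j$ gives fewer than $N/2$ pairs, a contradiction. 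This proves the theorem, with the (tower-type) dependence $\delta(k,\eta)$ inherited from the graph removal lemma, which is harmless here since $k$ is bounded in all our applications. An alternative, and in fact Green's original, route is to run his arithmetic regularity lemma directly — decompose each $1_{A_i}$ as structured $+$ small $+$ Fourier-uniform, expand $\#\{\text{solutions}\}$ over the characters of $G$, discard the negligible contributions of the small and uniform parts, and delete the Bohr-set cells on which the structured part is tiny — where the main burden is instead the bookkeeping of regular Bohr sets.
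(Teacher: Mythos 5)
The paper does not prove Theorem~\ref{thm:arith-rem} itself; it cites Green \cite{Gr05}, where the result is a corollary of his arithmetic regularity lemma, and separately notes (just after Theorem~\ref{thm:pop-rem}) that \cite{KSV09,Sz10} later derived it directly from the graph removal lemma. Your write-up follows this second, graph-theoretic route and is correct as a proof of Theorem~\ref{thm:arith-rem}: the $k$-partite cycle graph $\Gamma$, the count of exactly $N\cdot\#\{\text{solutions}\}$ transversal copies of $C_k$, the averaging over the diagonal $G$-action (which is simply transitive on each colour class), and the double-counting contradiction are all sound, and you are right to flag that for even $k$ one needs a transversal (coloured) version of the removal lemma, since a closed walk of even length on the cycle of classes can backtrack; for odd $k$ every $C_k$ in $\Gamma$ is automatically transversal.

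The one substantive point to be aware of is that the paper's choice to rely on Green's arithmetic-regularity proof rather than the graph-removal proof is deliberate, and your route loses exactly the feature the paper exploits. Immediately after Theorem~\ref{thm:arith-rem}, the paper observes that Green's construction of $A_i'$ is translation-invariant (if $A_i=B_i+t$ one can take $A_i'=B_i'+t$), uses this to deduce Theorem~\ref{thm:pop-rem}, and then explicitly remarks that ``in order to deduce Theorem~\ref{thm:pop-rem} it seems necessary to use the arithmetic regularity lemma due to the translation-invariance property required.'' Your symmetrised construction $A_i'=\{a_i: p(a_i)<\tfrac{1}{2k}\}$ is built from the (non-canonical) deletion set $D$ supplied by the graph removal lemma for $\Gamma$. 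Replacing $A_k$ by $A_k-x$ produces a genuinely different graph: shifting a single vertex class of $\Gamma$ translates two of the colour sets in opposite directions, and no relabelling of a single class turns the graph for $(A_1,\dots,A_{k-1},A_k-x)$ into the graph for $(A_1,\dots,A_k)$. Hence there is no reason the new removal set, nor the resulting $A_i'$, should be translates of the old ones. So your argument establishes Theorem~\ref{thm:arith-rem} in isolation, but does not yield the translation-invariance that the paper immediately uses to upgrade it to Theorem~\ref{thm:pop-rem} — which is precisely why the paper cites Green's proof rather than the KSV/Szegedy derivation.
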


By inspecting the proof, one notes that the construction of $A_i'$ is translation-invariant, in the sense that if $A_i = B_i + t$ for some $t \in G$, then one can take $A_i' = B_i' + t$. Using this observation, the following extension of this arithmetic removal lemma quickly follows.

\begin{theorem}[Removal lemma for popular sums] \label{thm:pop-rem}
Let $k \geq 3$ be a positive integer. Let $G$ be a finite abelian group with $|G| = N$, and let $A_1,\cdots,A_k \subseteq G$ be subsets. For any $\eta>0$ there exists a positive constant $\delta = \delta(k,\eta) > 0$ such that the following statement holds. For each $i$ there exists a subset $A_i' \subseteq A_i$ with $|A_i \setminus A_i'| \leq \eta N$, such that for each $x \in A_1'+\cdots+A_k'$, the number of solutions to $a_1+\cdots+a_k = x$ with $a_i \in A_i$ for all $i$ is at least $\delta N^{k-1}$.
\end{theorem}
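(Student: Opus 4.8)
The plan is to deduce Theorem~\ref{thm:pop-rem} from the Arithmetic removal lemma (Theorem~\ref{thm:arith-rem}) by a translation trick, using the remark that the sets $A_i'$ produced by Theorem~\ref{thm:arith-rem} can be chosen in a translation-invariant way. The difficulty in passing from Theorem~\ref{thm:arith-rem} to Theorem~\ref{thm:pop-rem} is that the former gives information only about the equation $a_1+\dots+a_k=0$, whereas we now want to control the number of solutions to $a_1+\dots+a_k=x$ simultaneously for \emph{every} popular value $x$, i.e.\ every $x\in A_1'+\dots+A_k'$. The key idea is that for a fixed $x$, a solution to $a_1+\dots+a_k=x$ with $a_i\in A_i$ is the same as a solution to $a_1+\dots+a_{k-1}+b_k=0$ with $a_i\in A_i$ ($i<k$) and $b_k\in A_k-x=:A_k^{(x)}$; and the translation-invariance of the construction lets us pick the exceptional sets coherently across all $x$.

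First I would apply Theorem~\ref{thm:arith-rem} with parameter $\eta$ to the $k$ sets $A_1,\dots,A_{k-1},A_k$, obtaining $\delta=\delta(k,\eta)$ and subsets $A_i'\subseteq A_i$ with $|A_i\setminus A_i'|\le\eta N$ such that the relevant property holds; but the cleanest route is instead the contrapositive. Fix any $x\in A_1'+\dots+A_k'$, where the primed sets will be specified below, and suppose for contradiction that the number of solutions to $a_1+\dots+a_k=x$ with $a_i\in A_i$ is less than $\delta N^{k-1}$. Equivalently, writing $A_k^{(x)}=A_k-x$, the number of solutions to $a_1+\dots+a_{k-1}+b=0$ with $a_i\in A_i$ and $b\in A_k^{(x)}$ is less than $\delta N^{k-1}$. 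By Theorem~\ref{thm:arith-rem} applied to $A_1,\dots,A_{k-1},A_k^{(x)}$, there are subsets $A_i'\subseteq A_i$ ($i<k$) and $(A_k^{(x)})'\subseteq A_k^{(x)}$, each missing at most $\eta N$ elements, with no solution to $a_1'+\dots+a_{k-1}'+b'=0$. By the translation-invariance remark, since $A_k^{(x)}=A_k-x$ we may take $(A_k^{(x)})'=A_k'-x$ where $A_k'\subseteq A_k$ depends only on $A_k$ (not on $x$), and likewise $A_1',\dots,A_{k-1}'$ do not depend on $x$. Hence the \emph{same} sets $A_1',\dots,A_k'$ work for all $x$: having no solution to $a_1'+\dots+a_{k-1}'+(a_k'-x)=0$ with $a_i'\in A_i'$ means precisely that $x\notin A_1'+\dots+A_k'$, contradicting the hypothesis $x\in A_1'+\dots+A_k'$.

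Thus with these translation-invariantly chosen $A_i'$, for every $x\in A_1'+\dots+A_k'$ the number of solutions to $a_1+\dots+a_k=x$ with $a_i\in A_i$ is at least $\delta N^{k-1}$, which is exactly the conclusion of Theorem~\ref{thm:pop-rem}. One should double-check the bookkeeping: the sets $A_1',\dots,A_{k-1}',A_k'$ are determined once and for all from $A_1,\dots,A_k$ by running the (translation-invariant) construction underlying Theorem~\ref{thm:arith-rem}, the bounds $|A_i\setminus A_i'|\le\eta N$ hold uniformly, and the value of $\delta$ depends only on $k$ and $\eta$. The main obstacle — really the only subtlety — is making rigorous the claim that the construction of the exceptional sets is genuinely translation-invariant and therefore can be made independent of $x$; this is the content of the remark preceding the statement, obtained by inspecting Green's proof of Theorem~\ref{thm:arith-rem}, and once that is granted the deduction above is immediate.
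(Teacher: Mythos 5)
Your proof is correct and is essentially identical to the paper's argument: both define $A_1',\dots,A_k'$ once and for all via Green's (translation-invariant) construction, then for any popular $x$ run the contrapositive of Theorem~\ref{thm:arith-rem} applied to $A_1,\dots,A_{k-1},A_k-x$, using translation invariance to identify the resulting exceptional sets with $A_1',\dots,A_{k-1}',A_k'-x$ and derive a contradiction with $x\in A_1'+\cdots+A_k'$.
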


\begin{proof}
Define $A_1',\cdots,A_k'$ as in Green's proof of the arithmetic removal lemma. Let $x \in A_1'+\cdots+A_k'$, and suppose that there are at most $\delta N^{k-1}$ solutions to $a_1+\cdots+a_k = x$ with $a_i \in A_i$ for all $i$. Theorem~\ref{thm:arith-rem} applied with the sets $A_1,\cdots,A_{k-1},A_k-x$, along with the observation made above about translation invariance, implies that there is no solution to $a_1' + \cdots + a_k' = x$ with $a_i' \in A_i'$ for all $i$, which is absurd.
\end{proof}

In other words, Theorem \ref{thm:pop-rem} asserts that, given a positive density subset $A \subseteq G$, all $k$-fold sums can be made popular by removing a few elements from $A$, for any fixed $k \geq 3$. When $k=2$, Theorem~\ref{thm:arith-rem} is trivially true whereas Theorem \ref{thm:pop-rem} fails (see~\cite{Wolf10} for a construction of a counterexample using niveau sets).

It was later observed in \cite{KSV09,Sz10} that Theorem \ref{thm:arith-rem} can also be deduced directly from the graph removal lemma, bypassing the arithmetic regularity lemma. In this way Theorem \ref{thm:arith-rem} can also be generalized to deal with general linear equations using hypergraph removal lemmas; see~\cite{Sh10} and references therein. On the other hand, in order to deduce Theorem \ref{thm:pop-rem} it seems necessary to use the arithmetic regularity lemma due to the translation-invariance property required. Consequently, while 
\cite[Conjecture 9.4]{Gr05} is proved in~\cite{Sh10}, its extension in the spirit of Theorem \ref{thm:pop-rem} is still open.

For subsets $A$ in an arbitrary abelian group (not necessarily finite), the following analogue of Theorem 3.2 can be deduced via a Freiman isomorphism.

\begin{corollary} \label{cor:pop-rem-torsion-free}
Let $k \geq 3$ be a positive integer. Let $G$ be an arbitrary abelian group, and let $A_1,\cdots,A_k \subseteq G$ be finite subsets. Let $A = A_1\cup \cdots \cup A_k$ and assume that $|A+A| \leq K|A|$ for some $K \geq 1$.
For any $\eta>0$ there exists a positive constant $\delta = \delta(k,\eta,K) > 0$ such that the following statement holds. For each $i$ there exists a subset $A_i' \subseteq A_i$ with $|A_i \setminus A_i'| \leq \eta |A|$, such that for each $x \in A_1'+\cdots+A_k'$, the number of solutions to $a_1+\cdots+a_k = x$ with $a_i \in A_i$ for all $i$ is at least $\delta |A|^{k-1}$.
\end{corollary}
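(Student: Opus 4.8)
The goal is to transfer Theorem~\ref{thm:pop-rem} from finite abelian groups to an arbitrary abelian group $G$ under the small-doubling hypothesis $|A+A| \leq K|A|$, where $A = A_1 \cup \cdots \cup A_k$. The plan is to use a Freiman isomorphism to embed the relevant portion of $G$ into a finite cyclic group $\Z/M\Z$ where the sums $a_1 + \cdots + a_k = x$ are faithfully represented, then apply Theorem~\ref{thm:pop-rem} there and pull back the conclusion.

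First I would record that $A$ is finite, so by the Freiman--Ruzsa machinery (or more elementarily, since we only need a single application of a Freiman isomorphism rather than a structural theorem) there is no loss in replacing $G$ by the subgroup it generates, and then, since only $k$-fold sums are relevant, it suffices to understand $A$ up to Freiman $s$-isomorphism for $s$ slightly larger than $k$. Concretely, I would fix $s = k+1$ (or $2k$ to be safe) and invoke the standard fact: if $|A + A| \leq K|A|$, then $A$ is Freiman $s$-isomorphic to a subset $\tilde A$ of $\Z/M\Z$ for some $M \leq C(s,K)|A|$; this is Ruzsa's embedding lemma (iterating the sumset bound to control $|sA - sA|$ via Pl\"unnecke--Ruzsa, then embedding into a box and then into a cyclic group of prime order just above). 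Write $\tilde A_i \subseteq \tilde A$ for the images of $A_i$, and set $N' := M \asymp_{K} |A|$.

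Next I would apply Theorem~\ref{thm:pop-rem} in $\Z/M\Z$ with the sets $\tilde A_1, \dots, \tilde A_k$ and parameter $\eta' := \eta/C(s,K)$ (chosen so that $\eta' N' \leq \eta |A|$), obtaining subsets $\tilde A_i' \subseteq \tilde A_i$ with $|\tilde A_i \setminus \tilde A_i'| \leq \eta' N' \leq \eta |A|$ such that every $x' \in \tilde A_1' + \cdots + \tilde A_k'$ has at least $\delta'(k,\eta',K) (N')^{k-1} \gg_{k,\eta,K} |A|^{k-1}$ representations as $a_1' + \cdots + a_k'$ with $a_i' \in \tilde A_i$. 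Pulling back along the Freiman isomorphism: let $A_i' \subseteq A_i$ be the preimage of $\tilde A_i'$, so $|A_i \setminus A_i'| \leq \eta|A|$. Because the $s$-isomorphism respects all additive equations involving at most $s > k$ terms, a solution $a_1 + \cdots + a_k = x$ in $G$ with $a_i \in A_i$ corresponds bijectively to a solution $\tilde a_1 + \cdots + \tilde a_k = \tilde x$ in $\Z/M\Z$; hence for each $x \in A_1' + \cdots + A_k'$ the number of representations is the same as in $\Z/M\Z$, namely $\gg_{k,\eta,K} |A|^{k-1}$. Setting $\delta = \delta(k,\eta,K)$ to be this implied constant completes the argument.

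The main obstacle is ensuring the Freiman isomorphism faithfully preserves \emph{all} the equations we care about, including the ambient constraint $x \in A_1' + \cdots + A_k'$ and the count of solutions to $a_1 + \cdots + a_k = x$ simultaneously; this forces the order of the isomorphism to exceed $k$ (so that $a_1 + \cdots + a_k = b_1 + \cdots + b_k$ is preserved, which is what makes the representation counts agree) and forces $M$ to be comparable to $|A|$ rather than merely finite (so that $\delta' (N')^{k-1}$ translates into the clean bound $\delta |A|^{k-1}$). Once the parameters $s = k+1$ and $M \asymp_K |A|$ are in place, the transfer is routine; the only mild care needed is in bookkeeping the chain of constants $\eta \mapsto \eta' \mapsto \delta' \mapsto \delta$ so that the final $\delta$ depends only on $k$, $\eta$, and $K$.
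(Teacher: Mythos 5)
Your proposal is correct and follows essentially the same route as the paper: embed $A$ into a finite abelian group via a Freiman isomorphism (the paper's order-$k$ map into some finite $\widetilde G$ with $|\widetilde A|\gg_{k,K}|\widetilde G|$ plays the role of your Ruzsa embedding into $\Z/M\Z$ with $M\ll_{s,K}|A|$), apply Theorem~\ref{thm:pop-rem} there, and pull the conclusion back. The only cosmetic difference is that order $k$ already suffices, since the relevant equation $a_1+\cdots+a_k = a_1'+\cdots+a_k'$ has $k$ terms per side, so your extra margin $s=k+1$ is harmless but unnecessary.
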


\begin{proof}
By Freiman's theorem, there is a Freiman isomorphism $\pi: A \rightarrow \widetilde{G}$ of order $k$ from $A$ to a finite abelian group $\widetilde{G}$, with image $\pi(A) = \widetilde{A}$, such that $|\widetilde{A}| = \alpha |\widetilde{G}|$ for some $\alpha = \alpha(k,K) > 0$. Let $\widetilde{A}_i = \pi(A_i)$ for $1 \leq i\leq k$. By Theorem \ref{thm:pop-rem} applied to $\widetilde{A}_1,\cdots,\widetilde{A}_k$ (with $\eta$ replaced by $\eta\alpha$), there are subsets $\widetilde{A}'_i \subseteq \widetilde{A}_i$ with $|\widetilde{A}_i \setminus \widetilde{A}'_i| \leq \eta\alpha |\widetilde{G}| = \eta |A|$, such that for each $\widetilde{x} \in \widetilde{A}'_1 + \cdots + \widetilde{A}'_k$, the number of solutions to $\widetilde{a}_1 + \cdots + \widetilde{a}_k = \widetilde{x}$ with $\widetilde{a}_i \in \widetilde{A}_i$ is at least $\delta |\widetilde{G}|^{k-1}$ for some $\delta = \delta(k,\eta,K) > 0$.

Now let $A_i' = \pi^{-1}(\widetilde{A}_i')$. Then $|A_i \setminus A_i'| = |\widetilde{A}_i \setminus \widetilde{A}_i'| \leq \eta |A|$. For any $x \in A_1'+\cdots+A_k'$, note that any solution to $\widetilde{a}_1 + \cdots + \widetilde{a}_k = \pi(x)$ with $\widetilde{a}_i \in \widetilde{A}_i$ gives rise to a solution to $a_1 + \cdots + a_k = x$ with $a_i = \pi^{-1}(\widetilde{a}_i)$, and moreover different solutions to the former give different solutions to the latter. The desired conclusion follows immediately.
\end{proof}

We expect some version of Corollary~\ref{cor:pop-rem-torsion-free} to hold as $k$ grows, and in this direction we will prove the following theorem.

\begin{theorem} \label{thm:pop-rem-largek}
For any $K \geq 1$ and $\eta > 0$, there exist positive integers $m = m(\eta, K)$ and $\ell = \ell(\eta, K)$ and a positive constant $\delta = \delta(\eta, K)$ such that the following statement holds.
Let $A \subseteq G$ be a subset in a torsion-free abelian group $G$ with $|A+A| \leq K|A|$. Then there exist an element $z \in G$ with $z + \ell A \subset (m+\ell)A$, and a subset $A' \subseteq A$ with $|A'| \geq (1-\eta)|A|$, such that for any positive integer $k > \ell$ and any element $x \in kA' + z$, we have $r_{(k+m)A}(x) \geq (\delta |A|)^{k+m-1}$, where $r_{nA}(x)$ denotes the number of solutions to $a_1 + \cdots + a_n = x$ with $a_1,\cdots,a_n \in A$. 

\end{theorem}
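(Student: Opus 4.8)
The plan is to reduce to the finite-group setting of Theorem \ref{thm:pop-rem} via Freiman's theorem, and then iterate that bounded-$k$ removal lemma to build up an arbitrarily large number of variables. First I would invoke Freiman's theorem to obtain, for some fixed order of isomorphism $s = s(\eta,K)$ to be chosen later, a Freiman isomorphism $\pi \colon A \to \widetilde{A} \subseteq \widetilde{G}$ onto a subset of density $\alpha = \alpha(s,K) > 0$ in a finite abelian group $\widetilde{G}$ of size $\widetilde{N}$; since $G$ is torsion-free and we only care about sums of at most $s$ terms of $A$, statements about $k$-fold sumsets with $k \le s$ transfer faithfully back and forth. The catch is that we want $k$ unbounded, while $\pi$ is only an isomorphism of bounded order, so the real content is to produce, inside $\widetilde{G}$, a ``stable'' structure that propagates: I would apply Theorem \ref{thm:pop-rem} to $\widetilde{A}$ with $\ell$ copies (for a suitable fixed $\ell = \ell(\eta,K)$) to extract $\widetilde{A}' \subseteq \widetilde{A}$ with $|\widetilde{A} \setminus \widetilde{A}'| \le \eta\alpha\widetilde{N}$ such that every $x \in \ell\widetilde{A}'$ has $\gg_{\eta,K} \widetilde{N}^{\ell-1}$ representations as a sum of $\ell$ elements of $\widetilde{A}$.

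The key step is then a self-improving ``doubling'' argument: I want to choose $\ell$ and $m$ so that the set $S := \ell\widetilde{A}'$ of popular sums is a union of cosets of a subgroup $H$ (or is at least genuinely stable under adding a single further copy of $\widetilde{A}'$, i.e. $\widetilde{A}' + S = S$ up to a negligible error). Concretely, by Kneser's theorem applied to the sumset $\ell\widetilde{A}'$, once $\ell$ is large enough (in terms of $\alpha$, hence in terms of $\eta,K$) the sumset $\ell\widetilde{A}'$ is a union of cosets of its stabilizer subgroup $H$, and adding more copies of $\widetilde{A}'$ no longer grows it: $(\ell+j)\widetilde{A}' = \ell\widetilde{A}'$ for all $j \ge 0$. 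Fix such an $\ell$. Now for $k > \ell$, any $x \in k\widetilde{A}'$ lies in $\ell\widetilde{A}'$, and writing $x = x_0 + (a_{\ell+1} + \cdots + a_k)$ for each decomposition, I count representations: a representation as $\ell$ terms can be prepended to the remaining $k - \ell$ fixed-but-arbitrary terms, but to get the full count $(\delta|A|)^{k+m-1}$ I need the stability to let me distribute the ``slack'' — this is where the extra $m$ terms enter. Choosing $m$ of the variables to range freely over $\widetilde{A}$ while the remaining $k - \ell - (m - \text{const})$ are pinned, each free choice multiplies the count by $\gg |A|$ (using density), and popularity of the anchor sum contributes the $\widetilde{N}^{\ell-1}$ factor; translating back through $\pi$ (legitimate because each representation only involves a bounded number of ``genuinely moving'' sums once we group terms) and using $|A| \asymp_K \widetilde N$ gives $r_{(k+m)A}(x) \ge (\delta|A|)^{k+m-1}$.

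Finally the element $z$ and the inclusion $z + \ell A \subset (m+\ell)A$: I would take $z$ to be $\pi^{-1}$ of a representative showing that the stabilizer coset structure of $\ell\widetilde{A}'$ sits inside $(m+\ell)\widetilde{A}$ — i.e. $z$ records the ``defect'' between $\ell A$ and a translate landing in $(m+\ell)A$, which exists precisely because $\ell\widetilde{A}' \subseteq \ell\widetilde{A} \subseteq (m+\ell)\widetilde{A}$ once $m \ge 0$ and we have room to absorb lower-order terms; the torsion-freeness of $G$ guarantees this pulls back to an honest element $z \in G$ with the stated containment, and $A' := \pi^{-1}(\widetilde{A}')$ has $|A'| \ge (1-\eta)|A|$ as required. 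The main obstacle I anticipate is making the stabilization/Kneser step quantitatively compatible with the bounded Freiman order: one must fix $\ell$ (hence the number of sums one is tracking) \emph{before} invoking Freiman's theorem, yet $\ell$ is chosen in terms of the density $\alpha$, which itself depends on the Freiman order $s$; resolving this circularity — by first fixing $s$ large enough in terms of $\eta,K$ alone, deriving a lower bound $\alpha \ge \alpha_0(\eta,K)$, and only then selecting $\ell, m$ — is the delicate bookkeeping at the heart of the argument.
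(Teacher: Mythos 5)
Your proposal has a genuine gap at its core. You propose to pass to a finite abelian group $\widetilde G$ via a Freiman isomorphism $\pi$ of bounded order $s = s(\eta,K)$, then stabilize $\ell\widetilde A'$ via Kneser's theorem, and finally ``translate back'' to count representations of $x$ as $(k+m)$-fold sums in $A$. But a Freiman isomorphism of order $s$ only transfers $k$-fold additive structure faithfully for $k \le s$: once $k + m > s$, representations $\widetilde a_1 + \cdots + \widetilde a_{k+m} = \pi(x)$ in $\widetilde G$ include ``wrap-around'' solutions that do not lift to genuine representations $a_1 + \cdots + a_{k+m} = x$ in the torsion-free $G$, and conversely nothing forces lifted representations to be counted correctly. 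Since the whole point of the theorem is that $k$ is \emph{unbounded}, you cannot fix $s$ once and for all and hope to transfer $(k+m)$-fold counts. Your remark that ``each representation only involves a bounded number of `genuinely moving' sums once we group terms'' is exactly the step that has no proof and, I believe, no proof in this framework: after Kneser stabilization the finite-group sumset $\ell\widtilde A'$ stops growing, while $kA$ in $\Z$ (say) keeps growing linearly in $k$, so the two pictures diverge qualitatively and the wrap-around terms are not negligible. Moreover, a density-$\alpha$ set in a cyclic group has $\ell$-fold sumset equal to the whole group after $O(1/\alpha)$ steps, and essentially every element then has $\sim |\widetilde G|^{\ell-1}$ representations; this is trivially far stronger than anything true in $\Z$, which is precisely the information lost in the transfer.

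The paper's mechanism for handling unbounded $k$ is fundamentally different and geometric. Freiman's theorem is used, but only to place $A$ in a proper progression $P \subset G$ of bounded rank $d$ with $|P| = O_K(|A|)$; the relevant map is then the genuine group homomorphism $\Z^d \to G$ sending basis vectors to the generators of $P$, which is faithful for sums of arbitrary length (no ``order'' restriction), so the problem is transported to $\Z^d$, not to a finite group. There the unbounded-$k$ counting is done via the Shapley--Folkman theorem, $k\CC = d\CC + (k-d)A$ for the convex hull $\CC$ of $A$ and any $k > d$, upgraded to a ``popular'' version using $\varepsilon$-regularization and a boundary-shaving lemma; this identity holds for all $k$ with no analogue of the Freiman-order obstruction. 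A separate ``filling lemma'' (a bounded iterated sumset $mA$ popularly covers a translate of $P$) supplies the translate $z$ and absorbs the bounded leftover $\ell P$-part into $(m+\ell)A$, which is also where the hypothesis $z + \ell A \subset (m+\ell)A$ comes from — a containment your sketch does not actually produce. If you want to salvage your route, you would have to replace the finite-group target by a torsion-free one (which removes Kneser stabilization entirely) or find a way to iterate the bounded-$k$ removal lemma without losing control of the error each time; the latter is exactly what the convex-geometry machinery accomplishes.
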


In the following two subsections we will show how the removal lemmas can be used to prove Hypothesis A*, and the proof of Theorem~\ref{thm:pop-rem-largek} will occupy Sections~\ref{sec:fill}--\ref{sec:proofofpopremlaagek}. To end this subsection, we give a rough sketch of the main ideas of the proof of Theorem \ref{thm:pop-rem-largek}, motivated by arguments in \cite{Ruz97}.

We shall first deduce a filling lemma: from the removal lemma for popular sums with a fixed number of summands and work of Tao and Vu~\cite{TV08} we deduce that 
there is a bounded $m$ and a proper progression $P$ such that $A \subseteq P$ and $mA$ (popularly) contains a translate of $P$, possibly after removing a small proportion of elements from $A$.

Now write $C$ for the convex hull of $A$, so that $C \subseteq P$. After shrinking $A$ a bit, any element $x \in kA$ is then a popular sum in $kC$. We then use an induction and the Shapley-Folkman theorem (see Lemma~\ref{le:ShapFolk} below) to show that popular sums in $kC$ are also popular in $(k-1)C + A$ (if $C$ is slightly shrunk in an appropriate way). After doing this reduction enough times, we deduce that $x$ is popular in $rC + (k-r)A$, for some bounded $r$. The final task of finding popular representations of elements in $rC$ can be done through the filling lemma described above since $rmA$ popularly contains a translate of $rC$. In practice we need to be very careful to always guarantee popularity at each stage.

\subsection{Proof of Hypothesis A* for bounded $u$}\label{sec:A*outline2}

We divide into two cases depending on whether $u = O_{\lambda}(1)$ or not. First suppose that $u = O_{\lambda}(1)$. Since Theorem~\ref{thm:pop-rem} is only applicable for $k \geq 3$, we need to do some initial preparations to handle the case where we would have $k = 2$. Write
\[
A' = \{a \in A \colon A \cap (N-2-a, N-a] = \emptyset\}.
\]
If $|A \setminus A'| \geq \left(\frac{\lambda}{2u}\right)^2 |A|$, the claim follows with $k = 2$, so we can assume that $|A \setminus A'| < \left(\frac{\lambda}{2u}\right)^2 |A|$. Then, by assumptions on set $A$, we have
$A' \subseteq (\lambda N/u,N/u]$ and
\[ 
\sum_{a \in A'} \frac{1}{a} \geq \sum_{a \in A} \frac{1}{a} - \frac{u}{\lambda N} |A\setminus A'| \geq \frac{1+3\lambda/4}{u},
\]
so that $A'$ has density at least $\lambda/u$ on the interval $[1,N/u]$. By Corollary \ref{cor:pop-rem-torsion-free} we may find a subset $A'' \subseteq A'$ with $|A' \setminus A''| \leq (\frac{\lambda}{2u})^2 |A'|$, such that for any $3 \leq k \leq u/\lambda$ and any $x \in kA''$, we have
\begin{equation}\label{eq:A*1} 
\left| \{(a_1,\cdots,a_k) \in A^k: a_1+\cdots+a_k = x\} \right| \geq \delta |A|^{k-1}, 
\end{equation}
for some $\delta = \delta(\lambda, u)>0$. Since
\[ 
\sum_{a \in A''} \frac{1}{a} \geq \sum_{a \in A'} \frac{1}{a} - \frac{u}{\lambda N} |A'\setminus A''| \geq \frac{1+\lambda/2}{u} > \frac{1}{u} + \frac{1}{\sqrt{N}-1} 
\]
by the lower bound for $N$, Bleichenbacher's theorem (Proposition \ref{prop:DiscBlei}) implies that there exists a positive integer $k$ and $a_1',\cdots,a_k' \in A''$ such that $N-k < a_1'+\cdots+a_k' \leq N$. Note that we necessarily have $k \in [u,u/\lambda]$, and by the choice of $A'$, we must have $k \neq 2$. If $k=1$, the claim follows immediately. If $k \geq 3$, then~\eqref{eq:A*1} applied to $a_1'+\cdots+a_k'$ gives that
\[ \left| \{(a_1,\cdots,a_k) \in A^k: N-k < a_1+\cdots+a_k \leq N \} \right| \geq \delta |A|^{k-1}, \]
as desired.

\subsection{Proof of Hypothesis A* for large $u$, assuming Theorem \ref{thm:pop-rem-largek}}\label{sec:A*outline3}
For the rest of the proof assume that $u\geq U$ for some sufficiently large $U$ depending on $\lambda$.  Let us now prove by induction on $j \geq 0$ that Hypothesis A* holds when $2^j U \leq u \leq 2^{j+1} U$. Let $A \subseteq (\lambda N/u, N/u]$ be a subset with $\sum_{a \in A} \frac{1}{a} > (1+\lambda)/u$. We wish to find a positive integer $k \in [u, u/\lambda]$ such that
\begin{equation}\label{eq:largek-induct}
\left|\{(a_1, \dots, a_k) \in A^k : N-k < a_1 + \dots + a_k \le N\}\right| \geq \left(\frac{c}{\log u} \right)^k \cdot \frac{|A|^k}{N}
\end{equation}
for some $c = c(\lambda) > 0$. In case $j = 0$, this follows from the work on case $u = O_\lambda(1)$ once $c$ is small enough.

Assume now that $2^j U \leq u \leq 2^{j+1} U$ for some $j \geq 1$. We shall study popular doubling in $A$, but first we need to find an appropriate notion of popularity. Write $r_0 = 0$ and
\[ 
r_i = 2^{i-10} \lambda^4 \frac{|A|^2}{|2A|} 
\]
for $i \geq 1$. For $i \geq 0$, let
\[ 
B_i = \{n \in 2A: r_i < r_{2A}(n) \leq r_{i+1}\}. 
\]

Note that since $\frac{|A|}{|2A|} \geq \frac{\lambda N /u^2}{2N/u} = \frac{\lambda}{2u}$ and $r_{2A}(n) \leq |A|$ for all $n \in 2A$, the set $B_i$ is empty for $i \geq I = 2 \log u - 1$. Furthermore
\[
\sum_{0 < i < I} |B_i| r_i = \frac{1}{2} \sum_{0 \leq i < I} |B_i| r_{i+1} - \frac{1}{2}|B_0| r_1 \geq \frac{1}{2} \sum_{n \in 2A} r_{2A}(n) - \frac{1}{2}|2A| \cdot \frac{\lambda^4 |A|^2}{2^9|2A|} \geq \frac{|A|^2}{4},
\]
so that there exists a smallest positive index $i_0$ such that
\[
|B_{i_0}| r_{i_0} \geq \frac{\lambda^4}{512 \log u} |A|^2.
\]
We choose
\[
E = \left\{(a_1, a_2) \in A \times A \colon r_{2A}(a_1+a_2) > r_{i_0} \right\}.
\]
Now
\begin{equation}\label{eq:Ecomplement}
|(A \times A) \setminus E| \leq \sum_{i < i_0} |B_i| r_{i+1} \leq |B_0|r_1 + 2\log u \cdot 2 \cdot \frac{\lambda^4}{512 \log u} |A|^2 \leq \frac{\lambda^4}{64}|A|^2.
\end{equation}
Write $D = \cup_{i \geq i_0} |B_i| = A {\overset{E}{+}} A \subseteq (2\lambda N/u, 2N/u]$.

Let us first consider the case that $|D| > 8  N/u^2$. Then 
\[
\sum_{\substack{d \in D \\ \lambda \frac{N}{u/2} < d \leq \frac{N}{u/2}}} \frac{1}{d} \geq |D| \frac{u}{2N} > \frac{1+\lambda}{u/2},
\]
and thus by induction hypothesis there is an integer $k/2 \in [u/2, u/(2\lambda)]$ such that
\[
|\{(d_1, \dots, d_{k/2}) \in D^{k/2} : N- k/2 < d_1 + \dotsb + d_{k/2} \le N\}| \geq \left(\frac{c}{\log u}\right)^{k/2} \frac{|D|^{k/2}}{N}.
\]
Hence, by the definitions of $D$ and $i_0$, we have
\[
\begin{split}
&|\{(a_1, \dots, a_{k}) \in A^{k} : N-k < a_1 + \dotsb + a_{k} \leq N\}| \\
&\geq |\{(d_1, \dots, d_{k/2}) \in D^{k/2} : N-k/2 < d_1 + \dotsb + d_{k/2} \leq N\}| \cdot r_{i_0}^{k/2}\\
&\geq  \left( \frac{c}{\log u} \right)^{k/2} \frac{|B_{i_0}|^{k/2}}{N} \cdot r_{i_0}^{k/2} 
\geq  \left( \frac{c}{\log u} \right)^{k/2} \cdot \frac{1}{N} \left(\frac{\lambda^4}{512 \log u} |A|^2 \right)^{k/2} \geq \left(\frac{c}{\log u}\right)^k \frac{|A|^k}{N},
\end{split}
\]
provided that $c \leq \frac{\lambda^4}{512}$.

Let us now consider the case that $|D| \leq 8 N/u^2$. We need the following lemma.

\begin{lemma}\label{le:BSG}
Let $(G, +)$ be an abelian group and let $\delta > 0$. If $E \subseteq A \times A$ satisfies
\[
|E| \ge (1-\delta^2)|A|^2 \quad \text{and} \quad |A \Epl A| \le K|A|,
\]
then there exists a set $A' \subseteq A$ such that
\[
|A'| \ge (1-2\delta)|A| \quad \text{and} \quad |A'+A'| \le \frac{K^3}{1-6\delta}|A|.
\]
\end{lemma}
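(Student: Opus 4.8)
The plan is to run a standard Balog–Szemerédi–Gowers-type argument, but in the ``popular'' or ``$99\%$'' regime where the hypothesis $|E| \geq (1-\delta^2)|A|^2$ forces almost every pair to contribute, so one can avoid the heavy graph-theoretic machinery and instead argue by a direct pigeonholing on dense neighborhoods. First I would locate a large subset $A' \subseteq A$ consisting of elements with ``thick rows and columns'': say $A' = \{a \in A : |\{b \in A : (a,b) \in E\}| \geq (1-\delta)|A| \text{ and } |\{b \in A : (b,a) \in E\}| \geq (1-\delta)|A|\}$. Since $|(A\times A)\setminus E| \leq \delta^2 |A|^2$, a Markov/counting argument shows at most $\delta|A|$ elements fail the row condition and at most $\delta|A|$ fail the column condition, hence $|A'| \geq (1-2\delta)|A|$, which is the first assertion.

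The main step is then to bound $|A'+A'|$. Fix $a_1, a_2 \in A'$; I want to represent $a_1 + a_2$ in many ways using the restricted sumset $A \Epl A$. The key observation is that for any $a_1, a_2 \in A'$, the set of $b \in A$ with both $(a_1, b) \in E$ and $(a_2, b) \in E$ has size at least $(1 - 2\delta)|A|$ (intersecting two sets each of density $\geq 1-\delta$). For each such $b$, we have $a_1 + a_2 = (a_1 + b) + (a_2 + b) - 2b$; more to the point, writing $s = (a_1+b) + (a_2 + b)$, both $a_1 + b$ and $a_2 + b$ lie in $A \Epl A$, so $a_1 + a_2 = s - 2b$ with $s$ a sum of two elements of $A\Epl A$. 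Now I count pairs: the number of triples $(a_1, a_2, b)$ with $a_1, a_2 \in A'$, $b \in A$, $(a_1,b),(a_2,b)\in E$ is at least $|A'|^2 (1-2\delta)|A| \geq (1-6\delta)|A|^3$. Each such triple is determined by the value $x = a_1 + a_2 \in A' + A'$, the value $b$ (at most $|A| \leq |A\Epl A|$ choices given $x$... ), and the pair $(a_1+b, a_2+b) \in (A\Epl A)^2$ whose difference-structure pins down $a_1, a_2$ once $x$ and the pair are known. Thus $(1-6\delta)|A|^3 \leq |A'+A'| \cdot |A\Epl A|^2 \leq |A'+A'| \cdot K^2 |A|^2$, giving $|A'+A'| \leq \frac{K^2}{1-6\delta}|A|$. (A slightly more careful bookkeeping of the $b$-dependence, noting $b = (a_1+b) - a_1$ is recoverable, yields the stated $K^3$ and absorbs any slack.)

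The step I expect to be the main obstacle is the counting/injectivity argument in the second paragraph: one must set up the map $(a_1, a_2, b) \mapsto (a_1 + a_2,\, a_1 + b,\, a_2 + b)$ and verify it is injective (it is: from $a_1 + b$ and $a_2 + b$ one recovers their difference $a_1 - a_2$, and combined with $a_1 + a_2$ this recovers $a_1, a_2$, hence $b$), and then check that the image lies in $(A'+A') \times (A\Epl A) \times (A\Epl A)$ so that the domain size $(1-6\delta)|A|^3$ is at most $|A'+A'| \cdot K^2|A|^2$. Getting the constant to come out as exactly $K^3/(1-6\delta)$ rather than $K^2/(1-6\delta)$ requires being slightly lossy — e.g. bounding one of the factors $|A\Epl A| \leq K|A|$ twice and the third factor (the number of valid $b$ for a fixed $x$ and fixed first coordinate) by $|A| \leq K|A| \leq |A\Epl A|/1$ — but since the lemma only claims an upper bound of $K^3/(1-6\delta)$, any such crude estimate suffices and there is no real difficulty, only care needed to present it cleanly.
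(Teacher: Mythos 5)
Your first paragraph is fine: the degree-counting argument correctly produces $A'$ with $|A'|\ge(1-2\delta)|A|$, which is also the route one would take. (The paper itself does not give a self-contained proof of this lemma --- it cites the hint to \cite[Exercise 2.5.4]{TV10} --- so you are right to attempt one from scratch.)

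The second paragraph, however, has the counting inequality pointing in the wrong direction, and this is fatal. You count a \emph{large} collection of triples $(a_1,a_2,b)$, size at least $(1-6\delta)|A|^3$, and then inject it into $(A'+A')\times(A\Epl A)^2$. Injectivity of a map from a set of size $M$ to a set of size $N$ shows $M\le N$, so from $(1-6\delta)|A|^3 \le |A'+A'|\cdot K^2|A|^2$ you may only deduce $|A'+A'|\ge \frac{(1-6\delta)}{K^2}|A|$ --- a \emph{lower} bound on $|A'+A'|$, not the upper bound you need. The final step ``giving $|A'+A'|\le \frac{K^2}{1-6\delta}|A|$'' divides on the wrong side of the inequality. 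A secondary issue is that the claimed injectivity of $(a_1,a_2,b)\mapsto(a_1+a_2,\,a_1+b,\,a_2+b)$ requires solving $2a_1=(a_1+a_2)+(a_1+b)-(a_2+b)$ for $a_1$, which fails when $G$ has $2$-torsion; the lemma is stated for an arbitrary abelian $G$.

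To get an upper bound on $|A'+A'|$ you should show that \emph{each} $x\in A'+A'$ has many distinct representations inside a globally bounded object, not that the set of triples is large. Concretely: for each $x\in A'+A'$ fix one pair $a_1,a_2\in A'$ with $a_1+a_2=x$, and count pairs $(b,b')\in A\times A$ with $(a_1,b),(a_2,b'),(b,b')\in E$. Since $a_1,a_2\in A'$ and $|E|\ge(1-\delta^2)|A|^2$, a union bound gives at least $(1-2\delta-\delta^2)|A|^2\ge(1-3\delta)|A|^2$ such pairs. Each one yields $s_1=a_1+b$, $s_2=a_2+b'$, $s_3=b+b'$, all in $A\Epl A$, with the identity $x=s_1+s_2-s_3$; this identity forces distinct $x$ to give distinct triples, and for fixed $x$ the map $(b,b')\mapsto(s_1,s_2,s_3)$ is injective since $b=s_1-a_1$, $b'=s_2-a_2$. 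Hence $|A'+A'|\cdot(1-3\delta)|A|^2\le|A\Epl A|^3\le K^3|A|^3$, so $|A'+A'|\le\frac{K^3}{1-3\delta}|A|\le\frac{K^3}{1-6\delta}|A|$. This also explains why the exponent is naturally $3$ (three factors of $A\Epl A$) and works in any abelian group with no torsion caveat.
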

\begin{proof}
This is a variant of the Balog-Szemer\'edi-Gowers theorem (see \cite[Theorem 2.29]{TV10}) which can be proved by incorporating the hint for \cite[Exercise 2.5.4]{TV10} to the proof of the Balog-Szemer\'edi-Gowers theorem in \cite[Section 6.4]{TV10}. See also \cite[Lemma 5.1]{GrKoMa} for a proof of a variant for $A-A$.
\end{proof}

Since $|A| \geq \lambda N/u^2$ and $|D| \leq 8N/u^2$, we have $|A \overset{E}{+} A| \leq (8/\lambda) |A|$. Recall also \eqref{eq:Ecomplement}. Thus Lemma \ref{le:BSG} implies that there is a subset $B \subseteq A$ such that
\[ 
|B| \geq (1-\lambda^2/4) |A| \quad \text{and} \quad |B+B| \le (20/\lambda)^3 |B|.
\]
Applying Theorem \ref{thm:pop-rem-largek} to $B$ with $K = (20/\lambda)^3$ and $\eta = \lambda^2/4$, we obtain an element $z \in \Z$ with $|z| = O_{\lambda}(N/u)$ and a subset $B' \subseteq B$ with $|B'| \geq (1-\eta) |B|$ satisfying the property that whenever $x \in kB' + z$ for any positive integer $k$ large enough depending on $\lambda$, there exists $n \in [k, k + O_{\lambda}(1)]$ such that $r_{nB}(x) \geq (c|B|)^{n-1}$. 

Since $B' \subseteq (\lambda N/u, N/u]$, we have
\[ 
\begin{split}
\sum_{b \in B'} \frac{1}{b} &\geq \sum_{a\in A}\frac{1}{a} - \frac{u}{\lambda N} \left( |A\setminus B| + |B\setminus B'| \right) \geq \sum_{a \in A} \frac{1}{a} - \frac{u}{\lambda N} \cdot \frac{\lambda^2}{2} |A| \\ 
& \geq \sum_{a \in A} \frac{1}{a} - \frac{u}{\lambda N} \cdot \frac{\lambda^2}{2} \cdot \frac{N}{u} \sum_{a \in A} \frac{1}{a} \geq \left(1-\frac{\lambda}{2}\right)\sum_{a \in A} \frac{1}{a} > \frac{1+\lambda/3}{u}. \\ 
\end{split}
\]
Recalling that $|z| =  O_{\lambda}(N/u)$, and writing $N' = N-z$ and $u' = u (N-z)/N$, we have
\[
u' = u - \frac{uz}{N} \geq u - O_{\lambda}(1) \geq \left(1-\frac{\lambda}{10}\right) u
\]
provided that $U$ is large enough. Hence
\[
\sum_{\substack{b \in B' \\ b \leq N'/u'}} \frac{1}{b} > \frac{1+\lambda/3}{u} \geq  \frac{1+\lambda/6}{u'} > \frac{1}{u'} + \frac{1}{\sqrt{N'}-1}.
\]

Hence, by discrete Bleichenbacher's theorem (Proposition \ref{prop:DiscBlei}), we find $k$ and $b_1,\cdots,b_k \in B'$ such that
\[ 
N-z-k < b_1 + \cdots + b_k \leq N-z. 
\]

Write $x := b_1+\cdots+b_k+z \in (N-k, N]$. Now $x \in kB'+z$ and hence there exists $\ell \in [k, k+O(1)]$ such that $r_{\ell B}(x) \geq (c|B|)^{\ell-1}$. Therefore,
\[ |\{(a_1, \dots, a_{\ell}) \in A^{\ell} : N-\ell < a_1 + \dotsb + a_{\ell} \leq N\}|   \geq r_{\ell A}(x) \geq r_{\ell B}(x) \geq (c|B|)^{\ell-1} \geq (c|A|/2)^{\ell-1}. \]
This clearly implies \eqref{eq:largek-induct}, completing the proof of Hypothesis A*.

\section{The filling argument}
\label{sec:fill}

In this section we carry out the first step in proving Theorem \ref{thm:pop-rem-largek}, that of locating a proper progression $P$ containing $A$ such that $mA$ fills a translate of $P$ for some bounded $m$.

\begin{lemma}[Filling lemma]\label{lem:fill-unpop}
For any $K \geq 1$, there exists a positive integer $m = m(K)$ such that the following statement holds. Let $G$ be a torsion-free abelian group, and let $A \subseteq G$ be a finite subset with $|A+A| \leq K|A|$. Then there is a proper progression $Q$ of rank $O_K(1)$ with size $|Q| = O_K(|A|)$, such that $A \subseteq Q$ and $g + Q \subseteq mA$ for some $g \in G$.
\end{lemma}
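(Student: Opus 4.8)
The plan is to derive Lemma~\ref{lem:fill-unpop} from Freiman's theorem together with the John-type theorems for generalized arithmetic progressions and iterated sumsets of Tao and Vu~\cite{TV08}. First I would apply Freiman's theorem for torsion-free abelian groups (see, e.g., \cite{TV10}) to $A$: since $|A+A|\le K|A|$ there is a proper progression $P$ of rank $d=O_K(1)$ and volume $|P|=O_K(|A|)$ with $A\subseteq P$, so that $A$ has density at least some $\delta=\delta(K)>0$ inside $P$. It then suffices to establish the following filling statement: there is $m=m(d,\delta)$ such that $mA$ contains a translate of $P$, or of a bounded modification of $P$ that is still a proper progression of rank $d$ and volume $O_K(|A|)$ and still contains $A$; one then takes this progression as the $Q$ in the lemma.

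For the filling statement, after a harmless translation we may assume $0\in A$, so that $A\subseteq 2A\subseteq 3A\subseteq\cdots$ and $|mA|\le K^{O(m)}|A|$ for every $m$. The model case is rank one: a subset of size $\ge\delta L$ of an arithmetic progression of length $L$ is, after quotienting by the common difference and dividing out the gcd of its difference set, a positive-density gcd-one subset of an interval, and a classical iterated-sumset argument shows that a bounded number of copies of it fills an interval of length $\ge L$. The general rank is obtained by slicing $P$ into rank-$(d-1)$ fibres, filling a long stretch in the remaining direction via the rank-one case, and inducting on $d$, multiplying the number of summands by a bounded factor at each stage. The delicate point --- and the place where I would invoke \cite{TV08} rather than reprove anything --- is that, through these iterated sumsets, the region one fills must remain a genuine proper progression of rank $d$ comparable as a progression to $P$; controlling the rank, properness and volume simultaneously is precisely what the John-type theorems of \cite{TV08} are for.

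The main obstacle is exactly this structural bookkeeping on generalized arithmetic progressions under iterated addition: a priori the slices of $mA$ need not be progressions, the rank of the filled region could grow, or the filled progression could be incomparable to the covering progression $P$ supplied by Freiman's theorem, and all of this must be kept under control with constants depending on $K$ alone. Granting the results of \cite{TV08}, the rest is routine: matching the hypothesis ``positive-density subset of a proper progression'' to the precise hypotheses of the cited statements, combining with Freiman's theorem as above, and tracking the dependence of $m$, the rank and the volume on $K$.
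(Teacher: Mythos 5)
Your two ingredients --- Freiman's theorem to get $A$ into a proper progression $P$ of bounded rank and size, and then the sumset-structure results of Tao--Vu~\cite{TV08} --- are exactly the ones the paper uses, so the top-level plan is right. The divergence is in the middle. You propose to establish the ``filling'' statement by a rank induction on $d$: slice $P$ into rank-$(d-1)$ fibres, handle rank one by a classical positive-density iterated-sumset argument, and induct, with \cite{TV08} invoked afterwards only to do ``structural bookkeeping'' on rank, properness and volume. The paper does none of this slicing. It observes that $A \subseteq P$ already forces the polynomial growth bound $|\ell A| \le |\ell P| \le \ell^{d-1}|P| \ll_K \ell^{d-1}|A|$, at which point \cite[Theorem~1.21]{TV08} applies directly and produces a proper progression $Q'$ with $g + Q' \subseteq \ell A \subseteq g' + kQ'$ for some $\ell, k = O_K(1)$. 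Iterating gives that $k\ell A$ contains a translate of $kQ'$, which contains a translate of $A$; then \cite[Corollary~1.11]{TV08} replaces $kQ'$ by a proper progression $Q$ with $kQ' \subseteq Q \subseteq jkQ'$, and one takes $m = jk\ell$.

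The ``delicate point'' you flag --- that slices of $mA$ need not be progressions, the rank could grow, the filled region could be incomparable to $P$ --- is not bookkeeping that \cite{TV08} silently handles on top of your induction; it is essentially the entire content of \cite[Theorem~1.21]{TV08}, which you should instead apply as a black box in place of the slicing argument. As written, your proposal either has a real gap (if you actually try to carry out the rank induction, the alignment of dense fibres and the preservation of a single comparable proper progression across the induction is not routine and you have not indicated how it would go) or it is a roundabout restatement of the paper's proof (if ``granting the results of \cite{TV08}'' means granting the sandwiching theorem, in which case the slicing scaffolding is superfluous). The fix is simple: drop the slicing/induction, verify the growth hypothesis $|\ell A| \ll_K \ell^{d-1}|A|$ from $A \subseteq P$, and cite \cite[Theorem~1.21]{TV08} and \cite[Corollary~1.11]{TV08} directly as the paper does.
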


\begin{proof}
By Freiman's theorem there is a proper progression $P$ of rank $d-1 = O_K(1)$ containing $A$, such that $|A| = \alpha |P|$ for some $\alpha \gg_K 1$. Thus for any positive integer $\ell$, we have
\[ | \ell A| \leq | \ell P| \leq \ell^{d-1} |P|  \leq \alpha^{-1} \ell^{d-1} |A|. \]
The hypotheses in \cite[Theorem 1.21]{TV08} are then satisfied for $\ell$ large enough depending on $K$. Hence there is a proper progression $Q'$ of rank $d' \leq d-1$, such that
\[ g + Q' \subseteq \ell A \subseteq g' + kQ', \]
for some constant $k = k(d)$ and some $g,g' \in G$. Hence the iterated sumset $k\ell A$ contains a translate of $kQ'$, which in turn contains a translate of $A$. Finally, by \cite[Corollary 1.11]{TV08} we may find a proper progression $Q$ containing $kQ'$, such that $Q$ is contained in $jkQ'$ for some $j = j(d)$. Thus for $m' = jk\ell$, the iterated sumset $m' A$ contains a translate of $Q$, which in turn contains a translate of $A$. Since $d = O_K(1)$ and $\ell, j, k = O_{d, K}(1)$, we have $m' \leq m$ for some integer $m$ depending only on $K$. Clearly the claim holds for this $m$.
\end{proof}

Combining the previous lemma with Corollary~\ref{cor:pop-rem-torsion-free} we obtain the following filling lemma for popular sums.
\begin{lemma}[Filling lemma, popularity version]\label{lem:fill}
For any $K \geq 1$ and $\eta \in (0, 1/2)$, there exist a large positive integer $m = m(K)$ and a small positive constant $\delta = \delta(K, \eta)$ such that the following statement holds.
Let $A \subseteq G$ be a subset in a torsion-free abelian group $G$ with $|A+A| \leq K|A|$. Then there exist a proper progression $P$ of rank $O_K(1)$ with size $|P| = O_K(|A|)$ and a subset $A' \subseteq A$ with $|A'| \geq (1-\eta)|A|$ and $A' \subseteq P$, such that $mA'$ popularly contains some translate of $P$. That is, for some $g \in G$ we have 
 $r_{mA}(x) \geq \delta |A|^{m-1}$ for any $x \in g+P$.
\end{lemma}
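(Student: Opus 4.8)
The plan is to chain together the two results that immediately precede this lemma: the (non-popularity) filling lemma, Lemma~\ref{lem:fill-unpop}, which produces a proper progression $Q$ of bounded rank with $A\subseteq Q$, $|Q|=O_K(|A|)$, and $g_0+Q\subseteq mA$ for some bounded $m=m(K)$ and some $g_0\in G$; and Corollary~\ref{cor:pop-rem-torsion-free}, which lets us ``popularize'' bounded-length sumsets after deleting a small proportion of $A$. The issue is that Lemma~\ref{lem:fill-unpop} only gives that $mA$ \emph{contains} a translate of $Q$, not that every element of that translate has many representations. So first I would apply Lemma~\ref{lem:fill-unpop} to get $Q$ and $m$. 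Then I would apply Corollary~\ref{cor:pop-rem-torsion-free} with $k=m$ (note $m=m(K)\geq 3$ may be arranged; if the raw $m$ from Lemma~\ref{lem:fill-unpop} is $1$ or $2$ one simply replaces it by $\max(m,3)$, enlarging $Q$ correspondingly inside $3Q$ via \cite[Corollary 1.11]{TV08}), all $A_i$ equal to $A$, doubling constant $K$, and parameter $\eta$, to obtain $A'\subseteq A$ with $|A'|\geq(1-\eta)|A|$ such that every $x\in mA'$ satisfies $r_{mA}(x)\geq \delta_0|A|^{m-1}$ for some $\delta_0=\delta_0(K,\eta)>0$.

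Next I would pass to the progression $P$ generated by $A'$. Concretely, apply Lemma~\ref{lem:fill-unpop} once more but to $A'$ in place of $A$: since $|A'|\geq\frac12|A|$ we have $|A'+A'|\leq|A+A|\leq K|A|\leq 2K|A'|$, so Lemma~\ref{lem:fill-unpop} (with $K$ replaced by $2K$) yields a proper progression $P$ of rank $O_K(1)$, size $|P|=O_K(|A'|)=O_K(|A|)$, with $A'\subseteq P$ and $h+P\subseteq m_1A'$ for some bounded $m_1=m_1(K)$ and some $h\in G$. The point of redoing the filling step for $A'$ is that we need the \emph{popular} conclusion to be about sums of elements of $A'$, which is exactly what Corollary~\ref{cor:pop-rem-torsion-free} gave us, and we need a progression containing $A'$ (not just $A$) to match. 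Now take $m$ in the statement to be $m_1\cdot m$ (after possibly enlarging so that everything lines up, and renaming): for $x\in g+P$ with $g:=h$, we have $x\in m_1A'$, so $x$ can be written as $a'_1+\dots+a'_{m_1}$ with $a'_j\in A'$; grouping and iterating — or more cleanly, writing $x-h\in m_1A'\subseteq m_1 A'$ and then using that each partial sum lands in $m A'$ after padding — one sees $x$ lies in $(m_1 m)A'$ up to the translate, hence by the popularity bound $r_{(m_1m)A}(x)\geq \delta_0 |A|^{m_1m-1}$ after summing over representations; absorbing constants gives $r_{mA}(x)\geq\delta|A|^{m-1}$ with $\delta=\delta(K,\eta)$.

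The one real subtlety — and the step I expect to cause the most friction — is the bookkeeping in the last paragraph: Corollary~\ref{cor:pop-rem-torsion-free} guarantees popularity for \emph{exactly} $m$-fold sums from $A'$, whereas the filling lemma naturally produces $x$ as an $m_1$-fold sum from $A'$ with $m_1\neq m$. To reconcile these one wants a clean ``monotonicity'' of popularity: if $r_{mA}(y)\geq \delta_0|A|^{m-1}$ for all $y\in mA'$, then for $x\in(m_1m)A'$ one writes $x=y_1+\dots+y_{m_1}$ with each $y_i\in mA'$, and for each such decomposition multiplies the representation counts, obtaining $r_{(m_1m)A}(x)\geq (\delta_0|A|^{m-1})^{m_1}\cdot(\text{number of decompositions})\geq (\delta_0|A|^{m-1})^{m_1}$, which is of the shape $(\delta|A|)^{m_1m-1}$ after adjusting $\delta$ (using $|A|\geq 1$). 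So I would state and use a short sublemma: \emph{if every element of $mA'$ is $\delta_0$-popular in $mA$, then every element of $(tm)A'$ is $\delta_0^t$-popular in $(tm)A$}, with $t=m_1$; this is a one-line induction. Finally, to put $A'\subseteq P$ with $mA'$ popularly containing a translate of \emph{that same} $P$, one notes $h+P\subseteq m_1A'$ and $m_1A'\subseteq $ (popular part of $(m_1m)A$ via padding with a fixed representation of $0$ built from differences in $A$ — or, since $G$ is torsion-free and we only need containment up to translate, simply absorb $m_1A'$ into $(m_1m)A'$ by adding a fixed element of $(m_1m-m_1)A'$), so setting the ambient $m$ of the statement to $m_1m$ and $g$ to $h$ plus that fixed element completes the proof.
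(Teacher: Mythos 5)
Your overall strategy — apply Corollary~\ref{cor:pop-rem-torsion-free} to produce $A'$ and then apply Lemma~\ref{lem:fill-unpop} to $A'$ to produce the progression $P$ — is exactly the route the paper takes. The first application of Lemma~\ref{lem:fill-unpop} to $A$ in your proposal is dead weight: the progression $Q$ it produces is never used, and the only thing you extract from it, the integer $m=m(K)$, is the wrong integer.

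There the proposal develops a genuine gap. You apply Corollary~\ref{cor:pop-rem-torsion-free} with $k=m(K)$, but the filling lemma applied to $A'$ (which has doubling constant at most $2K$) hands you a translate $h+P$ inside $m_1 A'$ with $m_1 = m(2K)$, and in general $m_1 \neq m$. To reconcile the two you propose the sublemma: \emph{if every element of $mA'$ satisfies $r_{mA}(y)\ge\delta_0|A|^{m-1}$, then every element of $(tm)A'$ satisfies $r_{(tm)A}(x)\ge\delta_0^t|A|^{tm-1}$}. This is false. Fixing one decomposition $x=y_1+\dots+y_t$ with $y_i\in mA'$ and multiplying the representation counts only gives
\[
r_{(tm)A}(x)\ \ge\ \prod_{i=1}^t r_{mA}(y_i)\ \ge\ \bigl(\delta_0|A|^{m-1}\bigr)^t = \delta_0^t\,|A|^{tm-t},
\]
which is short of the target $\delta'|A|^{tm-1}$ by a factor of $|A|^{t-1}$, and this loss cannot be absorbed into a constant $\delta'$ independent of $|A|$. (Heuristically, a random $x$ in $(tm)A'$ has roughly $|A|^{tm-1}/\mathrm{diam}(A)$ representations, while your bound only sees the ones compatible with a single fixed grouping into $t$ blocks; you are missing the combinatorial multiplicity of choosing the blocks.) Padding by a fixed element of $(m_1m-m_1)A'$ does not help: it merely shifts the target point and still invokes the bogus multiplicative step.

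The fix is to eliminate the mismatch rather than bridge it, which is what the paper does: set $m := m(2K)$ \emph{before} invoking the corollary, so that $A'$, produced by Corollary~\ref{cor:pop-rem-torsion-free} applied with exactly this $k=m$, satisfies $r_{mA}(x)\ge\delta|A|^{m-1}$ for every $x\in mA'$; then Lemma~\ref{lem:fill-unpop} applied to $A'$ (with doubling constant $2K$) yields $P$ and $g$ with $A'\subseteq P$ and $g+P\subseteq mA'$ for \emph{the same} $m$, and the conclusion is immediate. No bridging sublemma, no padding, no first pass through the filling lemma on $A$.
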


\begin{proof}
Let $m = m(2K)$ be the constant from Lemma~\ref{lem:fill-unpop}. By Corollary~\ref{cor:pop-rem-torsion-free}, there is a subset $A' \subseteq A$ with $|A'| \geq (1-\eta) |A|$ such that, for each $x \in m A'$ we have $r_{mA}(x) \geq \delta |A|^{m-1}$ for some $\delta=\delta(K, \eta)>0$. Since $|A'+A'| \leq 2K|A'|$, Lemma \ref{lem:fill-unpop} implies that there is a proper progression $P$ of rank $O_K(1)$ with size $|P| = O_K(|A|)$, with the properties that $A' \subseteq P$ and $g + P \subseteq mA'$ for some $g \in G$. For each $x \in g+P$ we then have $x \in mA'$, and hence $r_{mA}(x) \geq \delta |A|^{m-1}$, as desired.
\end{proof}


\begin{remark}
It is a standard result in additive combinatorics that $3A$ contains a large progression $P$ of small rank. Here we require the extra condition that $A$ is (essentially) contained in a translate of $P$. A similar result is proved in \cite[Lemma 2.5]{SV06}, but it is not enough for us to deduce Theorem \ref{thm:pop-rem-largek}.

When $G = \Z$ and $A$ lies densely inside an interval, Lemma~\ref{lem:fill} can also be proved by a Fourier analytic argument (see \cite[Lemma 8.5]{Bo97}).
\end{remark}

Via the filling lemma (Lemma~\ref{lem:fill}), Theorem \ref{thm:pop-rem-largek} reduces to the following proposition.

\begin{proposition} \label{prop:pop-rem-largek}
For any $\alpha, \eta > 0$ and $d \in \mathbb{N}$, there exist a positive integer $\ell = \ell(\alpha,d,\eta)$ and a small positive constant $\delta = \delta(\alpha,d,\eta)$ such that the following statement holds. Let $P = ([-N_1,N_1]\times\cdots\times [-N_d,N_d]) \cap \Z^d$ be a box for some positive integers $N_1,\cdots,N_d$, and let $A \subseteq P$ be a subset with $|A| \geq \alpha |P|$. Then there exists a subset $A' \subseteq A$ with $|A'| \geq (1-\eta)|A|$ with the property that, for any positive integer $k > \ell$ and any element $x \in kA'$, there are at least $(\delta |A|)^{k-\ell}$ ways to write $x = y + a_1 + \cdots + a_{k-\ell}$ with $y \in \ell P$ and $a_1,\cdots,a_{k-\ell} \in A$.
\end{proposition}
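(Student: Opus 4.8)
The plan is to reduce the statement to the fixed-number-of-summands removal lemma (Corollary~\ref{cor:pop-rem-torsion-free}) combined with an iterated application of the Shapley--Folkman theorem, exactly in the spirit of the outline in Section~\ref{sec:A*outline1}. The guiding idea is that any element $x\in kA'$ is, after a bit of shrinking, a ``popular'' sum $a_1+\cdots+a_k$ with $a_i\in A$, and that popularity in $kC$ (where $C$ is the convex hull of $A$) can be traded, one summand at a time, for popularity in $(k-1)C+A$. After $O(d)$ such trades the remaining ``continuous'' part lives in $\ell C\subseteq \ell P$, which is where we stop; the point of the Shapley--Folkman theorem is precisely that in $\R^d$ a sum of $k$ sets equals the sum of at most $d$ of the individual sets plus the Minkowski sum of the rest, so only boundedly many continuous coordinates survive.

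Concretely, first I would fix $C = \mathrm{conv}(A) \subseteq P\otimes\R$ and note $|A|\geq\alpha|P|$ forces $A$ to be ``dense'' in $C$ in the sense needed. Second, apply Corollary~\ref{cor:pop-rem-torsion-free} (with a suitable number of summands $k_0 = k_0(d)$, say $k_0 = d+3$, and with $\eta$) to obtain $A'\subseteq A$ with $|A'|\geq(1-\eta)|A|$ such that every element of $k_0 A'$ has $\gg_{\alpha,d,\eta}|A|^{k_0-1}$ representations as a sum of $k_0$ elements of $A$. Third, set up the key induction: I claim that for $k>\ell$ and $x\in kA'$, and for each $j=0,1,\dots,k-\ell$, the element $x$ has at least $(\delta|A|)^j$ representations $x = y + a_1+\cdots+a_j$ with $a_i\in A$ and $y\in (k-j)A'$ (viewed inside $(k-j)C$, or inside $\ell P$ once $k-j = \ell$). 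The base case $j=0$ is trivial. For the inductive step, given such a representation with $y\in(k-j)A'$ and $k-j>\ell$: since $y\in(k-j)A'$ we have $y/(k-j)\in C$, so by Shapley--Folkman $y$ lies in $(k-j-d)C + \sum$ of $d$ translates of $C$; because $A'$ is dense in $C$, after a controlled shrinking of $C$ one can peel off one genuine element $a_{j+1}\in A$ (with $\gg\delta|A|$ choices, using the popularity guaranteed by Corollary~\ref{cor:pop-rem-torsion-free} applied to the sub-sums of size $k_0$) leaving $y - a_{j+1}\in (k-j-1)A'$. Summing over the $\gg\delta|A|$ choices of $a_{j+1}$ and being careful to count distinct tuples completes the induction, and taking $j=k-\ell$ gives the desired $(\delta|A|)^{k-\ell}$ representations with $y\in\ell P$.

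The main obstacle, and the place where the bookkeeping is genuinely delicate, is maintaining popularity uniformly through the induction: each time we peel off a summand we must ensure not merely that \emph{some} representation remains but that a \emph{positive proportion} of the ``continuous mass'' is retained, and that the shrinkings of $C$ accumulate to at most a constant (depending on $d$, not $k$) loss. This is exactly the point flagged in the outline --- ``in practice we need to be very careful to always guarantee popularity at each stage'' --- and it forces one to run the argument with a quantitative version of denseness of $A'$ in $C$ that degrades in a controlled geometric fashion over the $O(d)$ steps, rather than the naive version. A secondary technical point is passing between the box $P$ and the group $\Z^d$: one uses that $P$ is a box so that $\ell P$ is again a box and the convex-geometry (Shapley--Folkman) statements apply verbatim to $C\subseteq \R^d$, then rounds back to $\Z^d$ at the cost of constants absorbed into $\delta$. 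With these two points handled, Proposition~\ref{prop:pop-rem-largek} follows, and feeding it into Lemma~\ref{lem:fill} yields Theorem~\ref{thm:pop-rem-largek}.
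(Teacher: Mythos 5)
Your high-level picture is right: peel off one element of $A$ at a time via a Shapley--Folkman-style trade, keeping the remaining ``continuous'' part inside a slightly shrunken copy of the convex hull until only a bounded number $\ell$ of continuous summands are left. You also correctly identify the crux: maintaining popularity uniformly through the iteration. But the proposal does not actually resolve that crux, and the mechanism you suggest for doing so is off in two important ways.

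First, the quantitative bookkeeping cannot work as you describe. You say the shrinking of $C$ should ``accumulate to at most a constant loss'' over ``$O(d)$ steps'' in a ``controlled geometric fashion.'' But the number of peeling steps is $k-\ell$, which grows with $k$; it is not $O(d)$. A geometric loss per step over an unbounded number of steps is fatal. The paper's resolution is sharper than you anticipate: there is \emph{no} accumulated loss at all. Setting $\CC' = (1-\gamma)\CC$ for a single fixed $\gamma$, one has $(k+1)\CC' = (k+1)(1-\gamma)\CC$ and $k\CC' = k(1-\gamma)\CC$; the difference $\gamma(k+1)-\gamma k = \gamma$ is exactly the slack available to absorb the small ball of perturbations in each step (see the computation $y+t \in (k-d-\gamma(k+1))\CC + d\CC + \gamma\CC \subseteq k(1-\gamma)\CC$ in Proposition~\ref{prop:popular-sf}). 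The shrinking factor $\gamma$ is independent of $k$ and does \emph{not} degrade along the iteration. This scaling identity is the key idea, and it is absent from your outline. Relatedly, your inductive hypothesis ``$y \in (k-j)A'$'' is too strong and is not what is preserved: after peeling, the remaining partial sum lands in the dilated convex body $(k-j)\CC'$, not in the discrete iterated sumset $(k-j)A'$.

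Second, the tool you invoke to supply popularity is the wrong one. You apply Corollary~\ref{cor:pop-rem-torsion-free} with $k_0 = d+3$ summands to get popularity of $k_0$-fold sums in $k_0A'$. That gives information about a fixed iterated sumset, but the peeling step needs something local: given a point $a \in A$ and a small neighbourhood, there must be $\gg |A|/|P|^{\Theta(1)}$ other points of $A$ nearby so that the perturbation $t$ can be moved into $A$. The removal lemma does not give this neighbourhood density, and it does not behave well under the translations and dilations that occur inside the induction. The paper instead uses the elementary notion of $\varepsilon$-regularity (Lemma~\ref{lem:regularize}), a local pigeonhole statement: every $a\in A'$ has $\geq \varepsilon |P_\varepsilon|$ elements of $A$ in the box $a + P_\varepsilon$. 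This is precisely what lets one replace $y + a$ by $(y+t) + (a-t)$ for $\gg \varepsilon |P_\varepsilon|$ admissible $t$'s, yielding the $\delta|A|$ choices at each step. Without a tool of this local flavour, the inductive step does not go through. (You also omit the reduction to the case where all $N_i$ are large, handled in the paper by slicing the box along the small dimensions; this is a secondary but necessary technicality, since Lemmas~\ref{lem:contain-box} and~\ref{lem:small-boundary} require the side lengths to exceed a threshold.)

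In short: the architecture (Shapley--Folkman plus peel-and-count) matches the paper, but the two essential ingredients --- the $\varepsilon$-regularity of $A$, and the fixed-$\gamma$ scaling identity that makes the shrinking factor invariant along the iteration --- are missing, and the tools you propose in their place (Corollary~\ref{cor:pop-rem-torsion-free}, geometric accumulation of shrinkage) do not do the job.
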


\begin{proof}[Proof of Theorem \ref{thm:pop-rem-largek} assuming Proposition \ref{prop:pop-rem-largek}] 
First note that if Theorem \ref{thm:pop-rem-largek} holds for some subset $A$, then it also holds for any translate of $A$. From the filling lemma (Lemma~\ref{lem:fill}), we obtain a subset $A_1 \subseteq A$ with $|A_1| \geq (1-\eta/4)|A|$ and a proper progression $P$ of rank $d=O_K(1)$ with size $|P| = O_K(|A|)$, such that $A_1 \subset P$ and $nA_1$ popularly contains a translate of $P$ for some $n = n(\eta, K)$. By translating $A$ appropriately, we may further assume that $P$ is symmetric.

Let $\pi: \Z^d\rightarrow G \supset P$ be the Freiman homomorphism mapping the standard basis vectors in $\Z^d$ to the generators of $P$. Since $P$ is proper, the map gives a bijection between the box $\pi^{-1}(P)$ and $P$. Write $\widetilde{P} = \pi^{-1}(P)$ and $\widetilde{A}_1 = \pi^{-1}(A_1)$. Applying Proposition \ref{prop:pop-rem-largek} to the box $\widetilde{P}$ and the subset $\widetilde{A}_1$, we obtain a positive integer $\ell = \ell(\eta,K)$ and a subset $\widetilde{A}' \subseteq \widetilde{A}_1$ with $|\widetilde{A}'| \geq (1-\eta/2) |\widetilde{A}_1|$, such that for any $k > \ell$ and $\widetilde{x} \in k\widetilde{A}'$, the number of ways to write
\begin{equation}\label{eq:tildex} 
\widetilde{x} = \widetilde{y}+\widetilde{a}_1 + \cdots + \widetilde{a}_{k-\ell} \end{equation}
with $\widetilde{y} \in \ell \widetilde{P}$ and $\widetilde{a}_1,\cdots,\widetilde{a}_{k-\ell} \in \widetilde{A}_1$ is at least $(\delta |A|)^{k-\ell}$, for some positive constant $\delta = \delta(\eta,K) > 0$. 

Let $A' = \pi(\widetilde{A}')$. Clearly $|A_1\setminus A'| = |\widetilde{A}_1\setminus \widetilde{A}'| \leq (\eta/2) |A|$, and thus $|A\setminus A'| \leq \eta |A|$. Moreover, for any $k > \ell$ and $x \in kA'$, we may find  $\widetilde{x} \in k\widetilde{A}'$ such that $\pi(\widetilde{x}) = x$. Via the map $\pi$, each representation for $\widetilde{x}$ of the form \eqref{eq:tildex} gives rise to a representation $x$ of the form
\begin{equation}\label{eq:x} 
x = y + a_1 + \cdots + a_{k-\ell} 
\end{equation}
with $y \in \ell P$ and $a_1,\cdots,a_{k-\ell} \in A_1$. Hence there are at least $(\delta |A|)^{k-\ell}$ such representations for $x$.

Recall from the output of the filling lemma that $n A_1$ popularly contains a translate of $P$. It then easily follows (for example from~\cite[Lemma 5.3]{GrKoMa}) that $2\ell n A_1$ popularly contains a translate $z+\ell P$ for some $z \in G$. Thus each representation for $x \in kA'$ of the form \eqref{eq:x} gives rise to at least $(\delta |A|)^{2\ell n-1}$ ways to write $z+x$ as a sum of $2\ell n + (k-\ell)$ elements of $A$, since
\[ r_{2\ell n A}(z+y) \geq (\delta |A|)^{2\ell n-1} \]
if $\delta > 0$ is small enough. We conclude that for any $k > \ell$ and $x \in kA'$, we have
\[ r_{(2\ell n + k-\ell)A}(z+x) \geq (\delta |A|)^{k-\ell} (\delta |A|)^{2\ell n-1} = (\delta |A|)^{2\ell n + k - \ell - 1}. \]
This shows that Theorem~\ref{thm:pop-rem-largek} holds with this choice of $\ell$ and with $m = (2n-1)\ell$.
\end{proof}

We will prove Proposition~\ref{prop:pop-rem-largek} in Section~\ref{sec:proofofpopremlaagek} using geometrical ideas, after establishing some preliminary lemmas in Section~\ref{sec:convexbodies}.

\section{Auxiliary results about convex bodies}
\label{sec:convexbodies}

{\em Notations.} In this section and the next, we use normal letters such as $A, C, P$ to denote subsets of $\Z^d$, and boldface letters such as $\CC, \CP$ to denote convex bodies in $\R^d$. For $t>0$, we use $t\CC, t\CP$ to denote dilations of convex bodies in the usual manner.

\smallskip
The aim of this section is to prove two intuitive properties of convex hulls of positive density subsets $A$ of large boxes $P$ in $\Z^d$. The first one, Lemma~\ref{lem:contain-box} says that, for some constant $\varepsilon > 0$ (depending only on the density of $A$ and on the dimension), the convex hull of $A$ contains a translate of a small dilate of the convex hull of the box. The second one, Lemma~\ref{lem:small-boundary} states that most lattice points in the convex hull of $A$ are away from the boundary of the convex hull. 

Before stating and proving these, we state two auxiliary results which will be used in the proofs of the two lemmas.

%
%

\begin{lemma}
\label{le:vol>>|latticepoints|}
For any $\alpha > 0$ and positive integer $d$, there exist constants $N_0 = N_0(\alpha, d)$ and $c = c(\alpha, d)$ such that the following statement holds. Let $\CP = [-N_1,N_1] \times \cdots \times [-N_d,N_d]$ be a box in $\R^d$ for some positive integers $N_1,\cdots,N_d \geq N_0$, and let $P = \CP \cap \Z^d$. Let $A\subseteq P$ be a subset with $|A| \geq \alpha |P|$, and let $\CC \subseteq \R^d$ be the convex hull of $A$. Then $\vol(\CC) \geq c |P|$.
\end{lemma}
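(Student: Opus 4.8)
The plan is to show that a positive-density subset $A$ of a large box $P$ cannot have its convex hull collapse in volume: intuitively, if $A$ has density $\gg \alpha$, then it cannot be squeezed into a thin slab, and any convex body that is not thin in any direction has volume comparable to the ambient box. First I would observe that we may rescale: replace $P$ by the unit-size box $\prod_i [-1,1]$ by dividing the $i$-th coordinate by $N_i$; lattice points in $P$ become a $\frac{1}{N_1}\Z \times \cdots \times \frac{1}{N_d}\Z$ grid inside $[-1,1]^d$, and $|P| = \prod_i (2N_i+1)$ is comparable (for $N_i \geq N_0$ large) to $2^d \prod_i N_i = \vol(\CP)/(\text{cell volume})$, so $|A| \geq \alpha|P|$ says $A$ occupies at least $\gg \alpha$ of the cells. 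Dilating volumes scales by $\prod_i N_i$, so it suffices to prove: a subset of the grid occupying density $\geq \alpha/2$ of the cells of $[-1,1]^d$ has convex hull of volume $\geq c(\alpha, d)$.

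The key step is a "width in every direction" argument. For the convex hull $\CC$, let $w(\CC) = \min_{|\theta| = 1} (\max_{x \in \CC} \theta \cdot x - \min_{x \in \CC} \theta \cdot x)$ be its minimal width. If $w(\CC) \geq \tau$ for some $\tau = \tau(\alpha, d) > 0$, then by a standard fact about convex bodies (e.g. $\CC$ contains a segment of length $\geq w(\CC)$ in every... more precisely, one can inscribe a box or a simplex of volume $\gg_d w(\CC)^d$, since a convex body of width $\geq \tau$ contains a ball of radius $\gg_d \tau$ — this follows from considering the largest inscribed ball and John's theorem, or more elementarily from the fact that a convex body not containing a ball of radius $r$ has width $< 2r$ in some direction), we get $\vol(\CC) \gg_d \tau^d$, which gives the conclusion after rescaling back. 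So it remains to rule out the case that $\CC$ is thin in some direction $\theta$: if $A$ lies in a slab $\{x : |\theta \cdot x - c| < \tau\}$ intersected with $[-1,1]^d$, that slab has volume $O_d(\tau)$, hence contains $O_d(\tau N^d)$ grid cells where $N = \max N_i$...

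Here I need to be slightly more careful, since the box is not a cube; I would instead argue directly in $\Z^d$ without full rescaling, or rescale each axis and track that the slab $\{|\theta \cdot x| < \tau\} \cap [-1,1]^d$ has $d$-dimensional volume $O_d(\tau)$ and so contains at most $O_d(\tau) \cdot |P|$ lattice points of $P$ (using $N_i \geq N_0$ so that lattice point counts in a convex region of not-too-small volume are comparable to the volume times $|P|/\vol(\CP)$; for a thin slab I only need the upper bound, which is easy: cover the slab by $O_d(1/\tau^{d-1})$ translates of $\tau$-cubes... actually simplest is to note that projecting out the thin direction, the slab's lattice points inject into $O_d(\tau |P|)$ choices). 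Choosing $\tau = \tau(\alpha, d)$ small enough that $O_d(\tau) < \alpha$ gives a contradiction with $|A| \geq \alpha |P|$, so $w(\CC) \geq \tau$ after rescaling, completing the proof.

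The main obstacle I anticipate is the bookkeeping around non-cubical boxes and the comparison between lattice-point counts and volumes: I need $N_0$ large enough that $|P|$ is within a constant (depending on $d$) of $\vol(\CP)$, and I need a clean upper bound for the number of lattice points of $P$ in a thin slab, which should come from the elementary estimate that a line parallel to the short direction meets $O(1 + \text{length})$ lattice points. The convex-geometry input (width $\geq \tau$ implies inscribed ball of radius $\gg_d \tau$ implies volume $\gg_d \tau^d$) is standard and I would cite it or give the two-line argument via the centroid / Steinhagen-type inequality. None of this is deep; the content is entirely the pigeonhole observation that density $\alpha$ forbids confinement to an $O_d(\tau)$-measure slab.
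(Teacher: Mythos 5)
Your proposal is correct in essence and takes a genuinely different route from the paper's. The paper argues purely within the lattice: it subdivides $P$ into $\prod_j \lceil \alpha^3 N_j\rceil$ congruent sublattices, finds by pigeonhole one on which $A$ still has density $\geq \alpha$, rescales that sublattice to a box $P'$ with every sidelength $> 1/\alpha$, observes that the image $B$ of $A$ there satisfies $|B| > \max_j\prod_{i\neq j}L_i$ (the maximal lattice-point count of a hyperplane slice of $P'$) so $B$ cannot lie in any hyperplane, extracts a nondegenerate lattice simplex of volume $\geq 1/d!$, and pulls it back through the anisotropic rescaling to get a simplex of volume $\gg_{\alpha,d}|P|$ inside $\CC$. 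You instead normalize $\CP$ to $[-1,1]^d$, show the rescaled convex hull has minimal width $\gg_d\alpha$ because a thin slab traps only $O_d(\tau)|P|$ rescaled lattice points, and then pass to volume via a width--inradius inequality. The pigeonhole heart is the same, but your route imports the (standard but nontrivial) convex-geometric fact that minimal width controls inradius, while the paper's route is elementary --- its only convexity input is that a nondegenerate integer simplex has volume $\geq 1/d!$. Two small corrections to your sketch, neither fatal: the claim that ``a convex body not containing a ball of radius $r$ has width $< 2r$ in some direction'' is false --- an equilateral triangle of inradius $r$ has minimal width $3r$ --- so you should rely on Steinhagen's inequality or John's theorem, either of which gives $r(\CC)\gg_d w(\CC)$ and hence $\vol(\CC)\gg_d w(\CC)^d$; and the lattice-count in the slab should read $O_d(\tau)\prod_i N_i$ rather than $O_d(\tau N^d)$, which as you anticipated is handled cleanly by attaching to each lattice point its fundamental cell of volume $\prod_i N_i^{-1}$, fattening the slab by the cell diameter $O(\sqrt d/N_0)$, and comparing total cell volume with the fattened slab's volume --- this is where the requirement $N_0 \gg_{\alpha,d} 1$ enters, exactly the freedom the lemma grants.
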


\begin{proof}
We can clearly assume that $\alpha < 1/100$. Write $M_j = \lceil \alpha^{3}N_j\rceil$, and let us split the box $P$ into $M_1 \dotsm M_d$ fibers
\[
P_{i_1, \dotsc, i_d} = \{(x_1, \dotsc, x_d) \in P \colon x_j \equiv i_j \pmod{M_j}\} \quad \text{with $0 \leq i_j < M_j$ for each $1 \leq j \leq d$},
\]
and write $A_{i_1, \dotsc, i_d} = A \cap P_{i_1, \dotsc i_d}$. For some $i_1, \dotsc, i_d$ we must have $|A_{i_1, \dotsc, i_d}| \geq \alpha |P_{i_1, \dotsc i_d}|$. By the natural bijection 
\[
\begin{split}
\rho \colon P_{i_1, \dotsc, i_d} &\to \prod_{j=1}^d \left[\left\lceil \frac{-N_j - i_j}{M_j}\right\rceil, \left\lfloor \frac{N_j - i_j}{M_j}\right\rfloor\right] \cap \mathbb{Z}^d =: P' \\ 
(x_1, \dotsc, x_d) &\to \left(\frac{x_1-i_1}{M_1}, \dotsc, \frac{x_d-i_d}{M_d}\right)
\end{split}
\]
we can map the corresponding fibre into the box $P'$ which has bounded sidelengths. Write $B$ for the image of $A_{i_1, \dotsc, i_d}$, and write $L_1,\cdots,L_d$ for the side lengths of $P'$. By our choice of $M_j$, we have $L_j > \alpha^{-1}$ for each $j$, once $N_0$ is large enough in terms of $\alpha$. It follows that
\[
|B| \geq \alpha |P'| > \max_j \prod_{i = 1, i \neq j}^d L_i.
\] 
Thus $B$ cannot be contained in any $d-1$-dimensional hyperplane, and so $B$ contains $d+1$ points generating a non-trivial simplex $\Delta$,  whose volume is at least $1/d!$ since its vertices are lattice points. Hence $\rho^{-1}(\Delta)$ has volume at least
\[
\frac{1}{d!} \prod_{i=1}^d M_i \gg_{\alpha,d} |P|,
\]
and the claim follows since, by convexity, $\rho^{-1}(\Delta) \subseteq \CC$.
\end{proof}




\begin{theorem}[John]
Let $\CC \subseteq \R^d$ be a convex body. There exists an invertible linear transformation $T: \R^d\rightarrow \R^d$ and a point $x_0 \in \CC$ such that
\[ B_d \subseteq T(\CC-x_0) \subseteq d B_d, \]
where $B_d$ is the unit ball $\{(x_1,\cdots,x_d):x_1^2+\cdots+x_d^2 \leq 1\}$.
\end{theorem}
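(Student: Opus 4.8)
The plan is to choose $T$ and $x_0$ so that the \emph{maximal volume ellipsoid inscribed in $\CC$} becomes the unit ball $B_d$, and then to show that this normalization forces $\CC \subseteq d B_d$.

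First I would establish the existence of a maximal volume inscribed ellipsoid. Writing an ellipsoid as $\EE = y + M(B_d)$ with $y \in \R^d$ and $M$ symmetric positive definite, the constraint $\EE \subseteq \CC$ cuts out a set of pairs $(y,M)$ that is closed, bounded (since $\CC$ is bounded) and nonempty (since $\CC$ has nonempty interior); as $\vol(\EE) = |\det M|\,\vol(B_d)$ is continuous, the supremum is attained at some $\EE$ of positive volume, whose center $x_0$ lies in the interior of $\CC$. Taking $T$ to be the inverse of the linear part of the affine map carrying $B_d$ onto $\EE - x_0$, the body $\CC' := T(\CC - x_0)$ contains $B_d$, and $B_d$ is the maximal volume ellipsoid inscribed in $\CC'$ --- this property transfers under $T$ because an invertible affine map scales the volumes of \emph{all} ellipsoids by the same nonzero factor. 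It then suffices to prove $\CC' \subseteq d B_d$.

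The inclusion $B_d \subseteq \CC'$ is immediate from the construction, so suppose toward a contradiction that there is $p \in \CC'$ with $|p| > d$. Composing with a rotation (which fixes $B_d$ and, being volume preserving, preserves the maximality property) I may assume $p = (h, 0, \dots, 0)$ with $h > d$; then by convexity $\CC_p := \mathrm{conv}(B_d \cup \{p\}) \subseteq \CC'$. An elementary computation identifies $\CC_p$: its slices agree with those of $B_d$ for $x_1 \leq 1/h$, and for $x_1 \geq 1/h$ it is the cone with apex $p$ over the $(d-2)$-sphere $\partial B_d \cap \{x_1 = 1/h\}$. The contradiction will come from producing an ellipsoid $\EE' \subseteq \CC_p$ with $\vol(\EE') > \vol(B_d)$, which violates maximality of $B_d$ inside $\CC'$.

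I would look for $\EE'$ among the ellipsoids of revolution about the $x_1$-axis,
\[
\EE' = \Bigl\{ x \in \R^d : \frac{(x_1 - c)^2}{a^2} + \frac{x_2^2 + \dots + x_d^2}{b^2} \leq 1 \Bigr\},
\]
of volume $a\,b^{d-1}\,\vol(B_d)$, and take $(a,b,c)$ to be a small perturbation of $(1,1,0)$: stretch slightly along $x_1$ (take $a = 1+s$), slide the center slightly to the right (take $c = c(s) > 0$, chosen so the left tip $x_1 = c - a$ stays inside $B_d$), and contract slightly in the transverse directions (take $b = 1 - \mu s$). The containment $\EE' \subseteq \CC_p$ then reduces to two requirements --- that $\EE'$ stay inside $B_d$ for $x_1 \leq 1/h$ and inside the tangent cone for $x_1 \geq 1/h$ --- and a first-order analysis in $s$ shows these are compatible with $\mu$ arbitrarily small exactly when $h > d$, while the volume condition $a b^{d-1} = (1+s)(1-\mu s)^{d-1} > 1$ holds as soon as $\mu < 1/(d-1)$. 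This produces the required larger inscribed ellipsoid and completes the contradiction, so $\CC' \subseteq d B_d$. The heart of the argument --- and the only step that is not soft --- is this last optimization: matching the containment and the volume gain, and seeing that the threshold is precisely $h = d$ (it is here that the constant $d$ appears, the sharper value $\sqrt{d}$ requiring the additional hypothesis of central symmetry, which is not assumed).
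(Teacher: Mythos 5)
The paper does not supply a proof of John's theorem: it simply cites \cite[Theorem 3.13]{TV10}, so there is no in-paper argument to compare against. What you give is the classical proof via the maximal-volume inscribed ellipsoid, and the structure is sound --- normalize so that the John ellipsoid of $\CC$ is $B_d$, then derive a contradiction from any $p \in \CC'$ with $|p| = h > d$ by exhibiting an ellipsoid of revolution inside $\mathrm{conv}(B_d \cup \{p\})$ of volume strictly greater than $\vol(B_d)$. The soft preliminaries (existence of the maximal inscribed ellipsoid by compactness, invariance of maximality under affine maps, reduction to $p$ on a coordinate axis by rotation) are all fine. One claim in the final step is, however, not right as stated: it is not true that the containment constraints are ``compatible with $\mu$ arbitrarily small exactly when $h > d$.'' With $a = 1+s$ and the left tip pinned at $-1$ (i.e.\ $c = s$, which is the correct choice --- retreating the tip into the interior only gives away room), the requirement that the perturbed ellipsoid lie inside $B_d$ on the left and inside the tangent cone on the right reduces, to first order in $s$, to the single tangency condition at the contact circle $x_1 = 1/h$, and that condition forces
\[
\mu \;\geq\; \frac{1}{h-1} + O(s),
\]
which is bounded away from zero. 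Meanwhile the volume is $(1+s)(1-\mu s)^{d-1} = 1 + s\bigl(1 - (d-1)\mu\bigr) + O(s^2)$, so strict increase requires $\mu < \tfrac{1}{d-1}$. The two requirements are simultaneously satisfiable precisely when $\tfrac{1}{h-1} < \tfrac{1}{d-1}$, i.e.\ $h > d$, which is the threshold you state. So your conclusion and the appearance of the constant $d$ are correct, and the approach is the standard one; to turn this into a proof you should replace the ``arbitrarily small $\mu$'' phrasing with the actual lower bound $\mu \geq \tfrac{1}{h-1}$ and carry out the two tangency computations (at $x_1 = -1$ and at $x_1 = 1/h$) explicitly, checking also that the containment on $[c-a,\,1/h]$ reduces to those two endpoint conditions by concavity of the relevant quadratic.
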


\begin{proof}
See \cite[Theorem 3.13]{TV10}.
\end{proof}

\begin{lemma}[Large boxes inside convex sets]\label{lem:contain-box}
For any $\alpha>0$ and positive integer $d$, there exist $N_0 = N_0(\alpha,d) > 0$ and $\beta = \beta(\alpha,d)>0$ such that the following statement holds.
Let $\CP = [-N_1,N_1] \times \cdots \times [-N_d,N_d]$ be a box in $\R^d$ for some positive integers $N_1,\cdots,N_d \geq N_0$, and let $P = \CP \cap \Z^d$. Let $A\subseteq P$ be a subset with $|A| \geq \alpha |P|$. Let $\CC \subseteq \R^d$ be the convex hull of $A$. Then $x_0 + \beta\CP \subseteq \CC$ for some $x_0 \in \Z^d$.
\end{lemma}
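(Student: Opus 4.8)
The plan is to combine John's theorem with the volume lower bound from Lemma~\ref{le:vol>>|latticepoints|} to show that $\CC$ not only has large volume but is in fact ``fat'' in every direction, and then to inscribe a box.

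\medskip

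\textbf{Step 1: Reduce to showing $\CC$ contains a large ball after a bounded linear rescaling.} Apply John's theorem to $\CC$: there is an invertible linear map $T$ and a point $x_0 \in \CC$ such that $B_d \subseteq T(\CC - x_0) \subseteq d B_d$. The issue is that $T$ could badly distort the coordinate box $\CP$; so instead of working with $T$ directly I would extract from $T(\CC-x_0)$ the information that $\CC-x_0$ contains an ellipsoid $\EE = T^{-1}(B_d)$ with $\EE \subseteq \CC - x_0 \subseteq d\EE$. The crucial point is that $\vol(\EE)$ is comparable to $\vol(\CC)$, which by Lemma~\ref{le:vol>>|latticepoints|} is $\gg_{\alpha,d} |P| \asymp_d \prod_j N_j = \vol(\CP)/2^d$.

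\medskip

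\textbf{Step 2: Control the shape of the ellipsoid using that $A \subseteq P$.} Since $A \subseteq \CP$, also $\CC \subseteq \CP$, hence $\EE \subseteq \CP - x_0 \subseteq 2\CP$ (using that $\CP$ is symmetric, so $\CP - x_0 \subseteq \CP + \CP = 2\CP$). Thus $\EE$ is an ellipsoid trapped inside the box $2\CP = [-2N_1,2N_1]\times\cdots\times[-2N_d,2N_d]$ with volume $\gg_{\alpha,d} \prod_j N_j$. Writing $\EE$ in terms of its principal semi-axes $\mu_1,\dots,\mu_d$ along the coordinate directions' analogues, the containment in $2\CP$ forces each semi-axis in the $e_j$ direction to satisfy $\mu_j \leq 2N_j$ (more precisely, the $j$-th coordinate of points of $\EE$ is bounded by $2N_j$), while $\vol(\EE) = c_d \mu_1\cdots\mu_d \gg_{\alpha,d}\prod_j N_j$. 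Combining these, each $\mu_j \gg_{\alpha,d} N_j$: if some $\mu_j$ were too small, the product of the others (each $\leq 2N_i$) could not compensate to reach the volume lower bound.

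\medskip

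\textbf{Step 3: Inscribe a coordinate box and clear denominators.} Once the ellipsoid $\EE \subseteq \CC - x_0$ has, along each coordinate axis, extent $\gg_{\alpha,d} N_j$, a fixed small dilate $\beta'\CP$ (with $\beta' = \beta'(\alpha,d)$) is contained in $\EE$: indeed an axis-parallel box is easily inscribed in an ellipsoid whose coordinate-extents are known, losing only a dimensional constant. Hence $x_0 + \beta'\CP \subseteq \CC$ for some $x_0 \in \R^d$, which need not be a lattice point. To fix this, shrink slightly: since $N_j \geq N_0$ is large, replacing $x_0$ by a nearby lattice point $x_0' \in \Z^d$ with $|x_0 - x_0'|_\infty \leq 1$ and $\beta'$ by $\beta = \beta'/2$, one has $x_0' + \beta\CP \subseteq x_0 + \beta'\CP \subseteq \CC$, provided $N_0$ is large enough that $\beta' N_j \geq 2$ for all $j$. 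This is the desired conclusion.

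\medskip

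The main obstacle is Step 2: one must rule out the ellipsoid being a thin pancake squeezed into one face of the box. This is exactly where the two inputs interact --- the volume lower bound of Lemma~\ref{le:vol>>|latticepoints|} says $\CC$ (hence the John ellipsoid) is genuinely $d$-dimensional with volume of the right order, while $A \subseteq P$ caps the extent in each coordinate direction; together they force non-degeneracy in every direction simultaneously. Making ``extent along $e_j$ is $\gg N_j$'' precise for a general (possibly rotated) ellipsoid requires a small argument: bound the width of $\EE$ in direction $e_j$ by projecting, note $\vol(\EE) \le (\text{width in }e_j) \cdot \vol(\pi_j(\EE))$ where $\pi_j$ is the projection killing the $j$-th coordinate, and bound $\vol(\pi_j(\EE)) \le \prod_{i\neq j} 2N_i$ since $\pi_j(\EE) \subseteq \pi_j(2\CP)$. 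Rearranging gives width in $e_j$ $\gg_{\alpha,d} N_j$, which is enough to run Step 3.
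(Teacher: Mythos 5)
Your approach is the same as the paper's at a high level: apply John's theorem to the convex hull $\CC$ and combine it with the volume lower bound of Lemma~\ref{le:vol>>|latticepoints|} and the containment $\CC \subseteq \CP$. The gap is in Steps 2--3. You conflate two different quantities: the principal semi-axes of the John ellipsoid $\EE$ (which enter the volume formula $\vol(\EE)=c_d\mu_1\cdots\mu_d$) and the coordinate extents of $\EE$ (which is what the containment and the Fubini/projection argument control). These coincide only when the ellipsoid is axis-aligned, which you cannot assume. Moreover, the claim ``an axis-parallel box is easily inscribed in an ellipsoid whose coordinate-extents are known'' is false. Take in $\R^2$ the ellipse with semi-axes $a=N$ along $(1,1)/\sqrt{2}$ and $b=1$ along $(1,-1)/\sqrt{2}$: both coordinate widths equal $\sqrt{2(a^2+b^2)}\approx \sqrt{2}\,N$, yet it contains no axis-parallel square of side $\beta N$ for fixed $\beta>0$, since the diagonal of such a square has length $\sqrt{2}\,\beta N$ in the minor-axis direction, exceeding $2b$. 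Thus the width lower bounds you derive are necessary but not sufficient; volume must enter Step~3, not just be used to produce the width bounds and then discarded.

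The argument can be repaired, most cleanly by first rescaling. Let $D=\mathrm{diag}(N_1,\dots,N_d)$. Then $D^{-1}\EE \subseteq [-2,2]^d$ forces every principal semi-axis $\nu_i$ of $D^{-1}\EE$ to satisfy $\nu_i \leq 2\sqrt{d}$, while $\vol(D^{-1}\EE)=\vol(\EE)/\prod_j N_j \gg_{\alpha,d} 1$ gives $\prod_i\nu_i \gg_{\alpha,d}1$; combining these yields $\min_i\nu_i\gg_{\alpha,d}1$, so $D^{-1}\EE$ contains a ball of radius $\gg_{\alpha,d}1$ and hence a cube $[-\beta,\beta]^d$, which pulls back to $\beta\CP\subseteq\EE$. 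The paper avoids the geometric reasoning about ellipsoid axes altogether: from $T^{-1}(e_j)\in 2\CP$ it reads off that the $(i,j)$-entry of $T^{-1}$ is $O(N_i)$, and from $\det T^{-1}\gg |P|$ and the cofactor expansion it deduces that the $(i,j)$-entry of $T$ is $O(1/N_j)$, so $\|T(x)\|_\infty\ll 1$ for $x\in\CP$ and therefore $\beta\CP\subseteq T^{-1}(B_d)\subseteq\CC-x_0$. Your Step~1 and the closing lattice-point adjustment match the paper; it is the passage from volume plus containment to the inscribed box that needs the correction above.
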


\begin{proof}
Since $N_1,\cdots,N_d$ are large enough, Lemma~\ref{le:vol>>|latticepoints|} implies that
 \[ \vol(\CC) \geq c N_1N_2\cdots N_d \]
for some positive constant $c = c(\alpha, d)$.
 Now apply John's theorem to $\CC$ to obtain an invertible linear transformation $T: \R^d \rightarrow \R^d$ and a point $x_0 \in \CC$ such that
 \[ B_d \subseteq T(\CC-x_0) \subseteq dB_d. \]
 In particular, we have 
\[ T^{-1}(B_d) \subseteq \CC - x_0 \subseteq 2\CP, \]
and thus $T^{-1}(e_i) \in 2\CP$ for the standard basis vectors $e_1,\cdots,e_d$, so that the $(i,j)$-entry of $T^{-1}$ is $O(N_i)$ for each $1\leq i,j \leq d$. Moreover, we have 
 \[ (\det T)(\vol (\CC)) = \vol (T(\CC-x_0)) \leq \vol(dB_d), \]
so that $\det T \ll |P|^{-1}$ and $\det T^{-1} \gg |P|$.

Now consider the $(i,j)$-entry of $T$. The bounds on the matrix entries of $T^{-1}$ imply that the determinant of the $(j,i)$-minor of $T^{-1}$ is $O(|P|/N_j)$. It follows that the $(i,j)$-entry of $T$ is bounded in absolute by
\[ O \left( \frac{1}{\det T^{-1}} \cdot \frac{|P|}{N_j} \right) = O\left( \frac{1}{N_j} \right). \]
It follows that $\|T(x)\|_{\infty} \ll 1$ for any $x \in \CP$. Hence $T(\CP) \subseteq \beta^{-1}B_d$ for $\beta>0$ small enough. This implies that $\beta\CP \subseteq T^{-1}(B_d)$ and thus $x_0 + \beta\CP \subseteq \CC$ as desired.

Finally, to ensure that $x_0 \in \Z^d$, we may replace $\beta$ by $\beta/2$ and note that $x_0 + (\beta/2)\CP$ contains a lattice point for any $x_0 \in \R^d$, once $N_0$ is large enough.
\end{proof}

\begin{lemma}[Lattice points near the boundary]\label{lem:small-boundary}
For any $\beta,\eta \in (0,1)$ and positive integer $d$, there exist $N_0 = N_0(d,\beta,\eta)>0$ and $\gamma = \gamma(d,\eta)>0$ such that the following statement holds.
Let $\CP = [-N_1,N_1] \times \cdots \times [-N_d,N_d]$ be a box in $\R^d$ for some positive integers $N_1,\cdots,N_d \geq N_0$.  Let $\CC \subseteq \CP$ be a convex body, and assume that $\beta\CP \subseteq \CC$. Let $\CC' = (1-\gamma)\CC$. Then $|(\CC \setminus \CC') \cap \Z^d| \leq \eta\cdot \vol(\CC)$.
\end{lemma}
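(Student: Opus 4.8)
We want to show that if $\CC$ is a convex body sandwiched between $\beta\CP$ and $\CP$, then for a suitable small $\gamma$ the lattice points in the ``shell'' $\CC \setminus (1-\gamma)\CC$ number at most $\eta \vol(\CC)$. The natural strategy is to pass from lattice-point counts to volumes, and then use a linear change of variables (via John's theorem) to reduce to the case of a body comparable to a Euclidean ball, where the shell volume is obviously $O(\gamma)$ times the body volume.

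\textbf{Step 1: Reduce the lattice count to a volume.} Since $\beta\CP \subseteq \CC$, the body $\CC$ contains a box of each sidelength $\geq 2\beta N_j \geq 2\beta N_0$; in particular, for $N_0$ large (in terms of $d, \beta$) every point of $\R^d$ within $\ell^\infty$-distance $d$ of a point of $\CC$ can be associated to a unit cube contained in $\CC' ' := (1+Cd/(\beta N_0))\CC$ (a tiny dilate), and more crudely one has the standard comparison $|\CC \cap \Z^d| \asymp_d \vol(\CC)$ and, for the shell, $|(\CC \setminus \CC') \cap \Z^d| \leq \vol\big((1+\tfrac{\sqrt d}{\beta N_0})(\CC \setminus \CC')\big) + O_d(\text{boundary terms})$. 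The clean way to do this: cover $(\CC\setminus\CC')\cap\Z^d$ by unit cubes centered at these lattice points; each such cube lies inside the slightly enlarged shell $(1+\sqrt d/(\beta N_0))\CC \setminus (1-\gamma)(1-\sqrt d/(\beta N_0))\CC$ (using $\beta\CP\subseteq\CC\subseteq\CP$ and $N_j\geq N_0$ to control how far a unit cube can stick out). Hence $|(\CC\setminus\CC')\cap\Z^d| \leq \vol\big(\CC_+ \setminus (1-\gamma')\CC_+\big)$ where $\CC_+$ is a dilate of $\CC$ by a factor $1+O_d(1/(\beta N_0))$ and $\gamma' = \gamma - O_d(1/(\beta N_0))$ is still positive once $N_0$ is large.

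\textbf{Step 2: Bound the shell volume of a convex body.} For any convex body $\CD\subseteq\R^d$, $\vol(\CD \setminus (1-\gamma)\CD) = (1-(1-\gamma)^d)\vol(\CD) \leq d\gamma\,\vol(\CD)$, \emph{provided} the dilation $(1-\gamma)\CD$ is taken about a center for which $(1-\gamma)\CD \subseteq \CD$ — which holds when the center is an interior point of $\CD$, e.g. the John center. Strictly, $(1-\gamma)\CD$ in the statement is dilation about the origin; but $\beta\CP\subseteq\CC\subseteq\CP$ forces $0$ to be a ``deep'' interior point of $\CC$ (it contains $\beta\CP \ni \beta\cdot 0$-neighborhood), so $(1-\gamma)\CC \subseteq \CC$ and the identity $\vol(\CC\setminus(1-\gamma)\CC) = (1-(1-\gamma)^d)\vol(\CC)$ applies directly, with no need for John's theorem at all. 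Thus $\vol(\CC_+ \setminus (1-\gamma')\CC_+) \leq d\gamma'\vol(\CC_+) \leq d\gamma'\,(1+O_d(1/(\beta N_0)))^d \vol(\CC)$.

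\textbf{Step 3: Conclude.} Combining Steps 1–2, $|(\CC\setminus\CC')\cap\Z^d| \leq 2d\gamma\,\vol(\CC)$ once $N_0$ is large enough in terms of $d,\beta$ (so that all the $O_d(1/(\beta N_0))$ fudge factors are $\leq 1$). Choosing $\gamma = \gamma(d,\eta) := \eta/(2d)$ gives $|(\CC\setminus\CC')\cap\Z^d| \leq \eta\vol(\CC)$, as required; note $\gamma$ depends only on $d$ and $\eta$, and the threshold $N_0$ absorbs the dependence on $\beta$. The main technical nuisance — the only place care is genuinely needed — is Step 1: handling the unit cubes that protrude outside $\CC$ near the boundary, which is why the hypothesis $\beta\CP\subseteq\CC$ (giving $\CC$ sidelengths $\gg N_0$ in every direction, hence relative fattening by a unit cube costs only a factor $1+O(1/N_0)$) and $N_j\geq N_0$ are both used; everything after that is the elementary volume identity $1-(1-\gamma)^d \leq d\gamma$.
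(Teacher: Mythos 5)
Your proposal is correct and takes essentially the same approach as the paper's proof: cover the lattice points of the shell by unit cubes, use $\beta\CP\subseteq\CC$ and $N_j\ge N_0$ to show the cubes protrude by only a relative factor $O(1/N_0)$, and then compare volumes of two dilates of $\CC$ (the paper chooses $\gamma$ so that $(1+\gamma)^d-(1-2\gamma)^d<\eta$, you choose $\gamma=\eta/(2d)$ via $1-(1-\gamma)^d\le d\gamma$; both are fine). One small arithmetic slip: with $\CC_+=(1+\epsilon)\CC$ and $(1-\gamma')\CC_+=(1-\gamma)(1-\epsilon)\CC$ you actually get $\gamma'=\gamma+O_d(\epsilon)$, not $\gamma-O_d(\epsilon)$ — but this is the favourable direction and only costs a harmless factor absorbed by taking $N_0$ large, so the argument stands.
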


\begin{proof}
Choose $\gamma = \gamma(d, \eta)$ so small that $(1+\gamma)^d-(1-2\gamma)^d < \eta$, and let $X = (\CC \setminus \CC') \cap \Z^d$. Let $\CB \subseteq \R^d$ be the unit box $[-1/2,1/2]^d$. Note that
\[ \bigcup_{x \in X} (x+\CB) \subseteq (\CC\setminus\CC') + \CB . \]
Since the union above is a disjoint union, we have
\[ |X| \leq \vol((\CC \setminus \CC') + \CB). \]
The volume above is at most
\[ \vol(\CC + \CB) - \vol(\{x \in \CC': x+\CB \subseteq \CC'\}). \]
If $N_0$ is large enough depending on $\beta$ and $\gamma$, then $\CB \subseteq \gamma\beta\CP$ and thus $\CB \subseteq \gamma\CC$. It follows that
\[ |X| \leq \vol((1+\gamma)\CC) - \vol((1-2\gamma)\CC) = \left[ (1+\gamma)^d-(1-2\gamma)^d \right] \vol(\CC) \leq \eta \cdot \vol(\CC), \]
by our choice of $\gamma$.
\end{proof}

\section{Proof of Proposition~\ref{prop:pop-rem-largek}}
\label{sec:proofofpopremlaagek}

Recall the notations from the beginning of Section~\ref{sec:convexbodies}. The following result of Shapley and Folkman resembles a simpler and non-popular version of what we wish to prove.
\begin{lemma}[Shapley-Folkman]
\label{le:ShapFolk}
Let $d$ be a positive integer, let $B \subseteq \R^d$, and let $\CC$ be the convex hull of $B$. For any integer $k > d$, one has
\[
k\CC = d\CC + (k-d)B.
\]
\end{lemma}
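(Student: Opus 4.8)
The plan is to prove the Shapley--Folkman lemma in the form stated, namely $k\CC = d\CC + (k-d)B$ for any integer $k > d$, where $\CC$ is the convex hull of $B \subseteq \R^d$. One inclusion is immediate: since $B \subseteq \CC$ we have $d\CC + (k-d)B \subseteq d\CC + (k-d)\CC = k\CC$, using only that $\CC$ is convex so that $s\CC + t\CC = (s+t)\CC$ for nonnegative reals. The content is in the reverse inclusion $k\CC \subseteq d\CC + (k-d)B$.

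For the reverse inclusion, I would start from an arbitrary point $x \in k\CC$, so $x/k \in \CC$. By Carath\'eodory's theorem in $\R^d$, $x/k$ is a convex combination of at most $d+1$ points of $B$; scaling, we may write $x = \sum_{j=1}^{d+1} \lambda_j b_j$ with $b_j \in B$, $\lambda_j \geq 0$, and $\sum_j \lambda_j = k$. The idea is then to "peel off" integer copies of the $b_j$'s: since the $\lambda_j$ sum to the integer $k$, and there are at most $d+1$ of them, a counting/greedy argument shows we can extract at least $k-d$ integer units. Concretely, one can repeatedly do the following: if some $\lambda_j \geq 1$, replace $\lambda_j$ by $\lambda_j - 1$ and record one copy of $b_j$; this peels off one element of $B$ and decreases the total weight by $1$. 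One continues until all remaining $\lambda_j < 1$, at which point the remaining total weight is less than $d+1$, i.e. at most $d$ (it need not be an integer, but it is $< d+1$). Having peeled off $m$ elements of $B$, the remaining weight is $k - m < d+1$, so $m > k - d - 1$, hence $m \geq k-d$. If $m = k-d$ exactly we are essentially done; if $m > k-d$ we have peeled off too many, but then we can "put back" some of the peeled copies (they are points of $B \subseteq \CC$) into the convex part — more carefully, one arranges the peeling to stop exactly when $m = k-d$, which is possible since each step decreases the weight by exactly $1$ and we start at weight $k$ and want to reach weight $d$: after exactly $k-d$ steps the remaining weight is exactly $d$. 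So: perform exactly $k-d$ peeling steps (always legal as long as the current total weight exceeds $d \geq 1$, which forces some $\lambda_j \geq$ weight$/(d+1) > d/(d+1)$, but we need $\lambda_j \geq 1$; this is the subtle point — see below), ending with residual weights summing to exactly $d$, giving a point of $d\CC$, plus $k-d$ elements of $B$.

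The main obstacle is making the peeling step always legal: after some steps the total residual weight $w$ satisfies $1 < w$ but it is conceivable that every individual $\lambda_j < 1$ (e.g. all equal to $w/(d+1)$ when $w/(d+1) < 1$), so we cannot subtract a full unit from any single coordinate. The fix is to allow re-expressing the residual convex combination before peeling: whenever the residual weight $w > d$, the point $y := \sum \lambda_j b_j$ lies in $w\CC$ with $w > d \geq 1$; in particular $y/w \in \CC$, and one shows that $y/w$, viewed in $\CC$, admits a representation as a convex combination of $\le d+1$ points of $B$ in which \emph{some} coefficient is $\ge 1/w \cdot$ (something) --- actually the clean statement is: if $y \in w\CC$ with $w \geq 1$ then $y \in (w-1)\CC + B$. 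This is itself the $k=d+1$-free version and can be proved directly: write $y/w \in \CC$ as a convex combination $\sum_{j} \mu_j b_j$ of at most $d+1$ points with $\sum \mu_j = 1$; pick the index $i$ with $\mu_i$ maximal, so $\mu_i \geq 1/(d+1)$; then $y = w\mu_i b_i + \sum_{j \ne i} w\mu_j b_j$, and since $w \geq 1$ we may split off $b_i$ provided $w\mu_i \geq 1$. When $w\mu_i < 1$ one needs a genuinely different argument. The standard resolution, which I would adopt, is to prove the sharper claim $y \in w\CC,\ w \geq d+1 \ \Rightarrow\ y \in (w-1)\CC + B$ by an extreme-point/exchange argument: among all representations $y = \sum_{j} \mu_j b_j$ with $b_j \in B$ distinct, $\mu_j \ge 0$, $\sum \mu_j = w$, and the number of terms minimal, minimality forces the $b_j$ to be affinely independent, hence at most $d+1$ of them, hence some $\mu_j \ge w/(d+1) \ge 1$; subtract $1$ from that $\mu_j$. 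Iterating $k-d$ times (each iteration legal because the running weight stays $\ge d+1$ until the last step, where it becomes $\ge d$ --- arrange so the final weight is exactly $d$) yields $x \in d\CC + (k-d)B$, completing the proof. I would present the one-step lemma ("$w \ge d+1 \Rightarrow w\CC \subseteq (w-1)\CC + B$", via the minimal-support representation and the pigeonhole $\mu_j \ge w/(d+1)$) cleanly and then iterate; the iteration and the trivial inclusion are routine.
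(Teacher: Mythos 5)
Your argument is correct. The paper itself gives no proof of this lemma, only citing \cite[Appendix~1]{EkTe76} and \cite{Cassels 75}, so there is nothing in the paper to compare against; what you present is a complete, self-contained proof of the standard kind. The exposition in your middle paragraph (the naive greedy peeling and the discussion of why it can get stuck) is a detour that can be cut: the real engine is the one-step lemma in your final paragraph, and it works cleanly on its own. Concretely, the trivial inclusion $d\CC+(k-d)B\subseteq k\CC$ follows from $B\subseteq\CC$ and $s\CC+t\CC=(s+t)\CC$ for $s,t\ge 0$; for the reverse inclusion the key claim is that if $w\ge d+1$ then $w\CC\subseteq (w-1)\CC+B$. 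Your proof of this is correct: a representation $y=\sum\mu_j b_j$ with $b_j\in B$, $\mu_j\ge 0$, $\sum\mu_j=w$ exists (since $y/w\in\CC=\mathrm{conv}(B)$ is a finite convex combination), choosing one with a minimal number of terms forces the $b_j$ to be affinely independent (any affine dependence $\sum c_jb_j=0$, $\sum c_j=0$, $c\ne 0$ lets you shift $\mu\mapsto\mu-t^*c$ for a suitable $t^*>0$ to kill a term while preserving nonnegativity and the weight), affine independence bounds the number of terms by $d+1$, pigeonhole gives some $\mu_{j^*}\ge w/(d+1)\ge 1$, and subtracting $1$ from $\mu_{j^*}$ leaves a nonnegative combination of weight $w-1\ge d\ge 1$, hence a point of $(w-1)\CC$, with $b_{j^*}$ peeled off. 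Iterating this $k-d$ times from weight $k$ down to weight $d$ is legal at every step since the running weight is $\ge d+1$ throughout, yielding $k\CC\subseteq d\CC+(k-d)B$.
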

\begin{proof}
See e.g.~\cite[Appendix 1]{EkTe76} or~\cite[Corollary on page 435]{Cassels 75}.
\end{proof}

In order to extend the previous lemma to popular representations, we need to introduce some notation.

\begin{definition}[$\varepsilon$-regular subsets]
Let $\CP = [-N_1,N_1] \times \cdots \times [-N_d,N_d]$ be a box in $\R^d$ for some positive integers $N_1,\cdots,N_d$, and let $P = \CP \cap \Z^d$ and $P_{\varepsilon} = \varepsilon\CP \cap \Z^d$ for $\varepsilon>0$. A subset $A \subseteq P$ is called $\varepsilon$-regular, if for each $a \in A$ the small box $a + P_{\varepsilon}$ centered around $a$ contains at least $\varepsilon |P_{\varepsilon}|$ elements in $A$.
\end{definition}

\begin{lemma}[Regularization]\label{lem:regularize}
Let $\CP = [-N_1,N_1] \times \cdots \times [-N_d,N_d]$ be a box in $\R^d$ for some positive integers $N_1,\cdots,N_d$, and let $P = \CP \cap \Z^d$. Let $A\subseteq P$ be a subset with $|A| = \alpha |P|$ for some $\alpha>0$. Let $\eta>0$ be real. For any $\varepsilon \in (0,\alpha \eta 100^{-d})$, there is an $\varepsilon$-regular subset $A' \subseteq A$ with $|A'| \geq (1-\eta) |A|$.
\end{lemma}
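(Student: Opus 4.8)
The plan is to run a standard greedy deletion (``cleaning'') argument. Put $A_0 := A$. Given $A_i$, stop and set $A' := A_i$ if $A_i$ is already $\varepsilon$-regular; otherwise there is a witness $a_i \in A_i$ with $|A_i \cap (a_i + P_\varepsilon)| < \varepsilon |P_\varepsilon|$, and we set $A_{i+1} := A_i \setminus (a_i + P_\varepsilon)$. Since $0 \in P_\varepsilon$ we have $a_i \in A_i \cap (a_i + P_\varepsilon)$, so each step removes at least one point and the process terminates after some finite number $T$ of steps; the resulting set $A' = A_T$ is $\varepsilon$-regular by construction. The entire content is to show $|A \setminus A'| \le \eta |A|$.

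\textbf{Counting the deletions.} Two facts will be used. First, the deleted sets $A_i \setminus A_{i+1} = A_i \cap (a_i + P_\varepsilon)$ are pairwise disjoint and each has size $< \varepsilon|P_\varepsilon|$, so $|A \setminus A'| < T\varepsilon|P_\varepsilon|$. Second, the centers $a_0,\dots,a_{T-1}$ are well separated: if $i<j$ then $a_j \in A_j \subseteq A_{i+1} = A_i \setminus (a_i + P_\varepsilon)$, hence $a_j - a_i \notin P_\varepsilon$; since $\CP$, and therefore $P_\varepsilon = \varepsilon\CP \cap \Z^d$, is symmetric about the origin, we get $a_i - a_j \notin P_\varepsilon$ for all $i \neq j$, equivalently (as $a_i - a_j$ is a lattice point) $a_i - a_j \notin \varepsilon\CP$. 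Consequently the lattice boxes $a_i + \bigl(\tfrac12\varepsilon\CP \cap \Z^d\bigr)$ are pairwise disjoint: a common point $p$ would give $a_i - a_j = (p - a_j) - (p - a_i)$ with $p - a_j,\, p - a_i \in \tfrac12\varepsilon\CP$, hence $a_i - a_j \in \varepsilon\CP$, a contradiction. Since $a_i \in \CP$ and $\varepsilon < 1$, all these boxes lie inside $(1+\tfrac{\varepsilon}{2})\CP \cap \Z^d \subseteq \tfrac32\CP \cap \Z^d$, so $T \cdot |\tfrac12\varepsilon\CP \cap \Z^d| \le |\tfrac32\CP \cap \Z^d|$.

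\textbf{Putting it together.} Combining the two facts,
\[ |A \setminus A'| \;<\; \varepsilon\,|P_\varepsilon| \cdot \frac{\,|\tfrac32\CP \cap \Z^d|\,}{\,|\tfrac12\varepsilon\CP \cap \Z^d|\,}. \]
Writing each lattice-point count as a product over the $d$ coordinates of terms $2\lfloor cN_j\rfloor + 1$ and using only $N_j \ge 1$, elementary coordinatewise comparisons (splitting according to whether $\varepsilon N_j \ge 1$ or not, the latter coordinates contributing a factor $1$) yield $|P_\varepsilon| / |\tfrac12\varepsilon\CP \cap \Z^d| \le 6^d$ and $|\tfrac32\CP \cap \Z^d| \le 2^d |P|$. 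Hence $|A \setminus A'| < 12^d \varepsilon |P| < 100^d \varepsilon |P| < \alpha\eta|P| = \eta|A|$ by the hypothesis $\varepsilon < \alpha\eta\,100^{-d}$, so $|A'| \ge (1-\eta)|A|$, as required.

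\textbf{Where the care is needed.} The one subtle point is the last paragraph: one must not pass to volumes, because $N_1,\dots,N_d$ are not assumed large and $\varepsilon\CP$ may contain very few lattice points, so $\vol(\tfrac12\varepsilon\CP)$ can be an arbitrarily poor proxy for $|\tfrac12\varepsilon\CP \cap \Z^d|$ (this is exactly why the bound on $T$ must be phrased via lattice-point counts rather than volumes). Handling the comparison directly coordinate by coordinate keeps all the constants uniform in the $N_j$; everything else is routine bookkeeping.
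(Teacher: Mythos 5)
Your proof is correct, but it takes a genuinely different route from the paper's. The paper's proof is a one-shot tiling argument: it partitions $P$ into $O(\prod_i \min(N_i,\varepsilon^{-1}))$ translates of $P_{\varepsilon/2}$, throws away from $A$ every cell holding at most $\varepsilon|P_\varepsilon|$ points of $A$, and observes that $\varepsilon$-regularity of the remainder is automatic because the cell through any surviving point $a$ sits inside $a+P_\varepsilon$ (using $2P_{\varepsilon/2}\subseteq P_\varepsilon$); the deletion count is then (number of cells) $\times$ (light-cell threshold). You instead run a greedy cleaning iteration, deleting a bad point together with its $P_\varepsilon$-neighborhood at each step, and bound the number of iterations by a packing argument: the witnesses $a_i$ are $P_\varepsilon$-separated, so the half-boxes $a_i+(\tfrac12\varepsilon\CP\cap\Z^d)$ are disjoint inside $\tfrac32\CP\cap\Z^d$. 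Both schemes are standard and both land at a $C^d\varepsilon|P|$ bound on the deletions; the tiling version is slightly shorter and avoids the iteration, while your greedy version is a bit more flexible in that it does not rely on being able to tile (only on a symmetric ``separation gadget''). You correctly flagged the one delicate point, which both proofs share: the $N_j$ are \emph{not} assumed large, so all comparisons must be done with lattice-point counts coordinate by coordinate rather than via volumes; your $6^d$ and $2^d$ estimates hold with room to spare (in fact $3^d$ and $(3/2)^d$), so the final $12^d<100^d$ comfortably closes the argument.
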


\begin{proof}
Let $P_{\varepsilon/2} = (\varepsilon/2)\CP \cap \Z^d$, so that the side lengths of $P_{\varepsilon/2}$ are precisely $2\lfloor \varepsilon N_1/2 \rfloor+1, \cdots, 2\lfloor \varepsilon N_d/2 \rfloor +1$. Cover $P$ by at most
\[ \prod_{i=1}^d \left\lceil \frac{2N_i+1}{2\lfloor \varepsilon N_i/2 \rfloor+1} \right\rceil \leq  10^d \prod_{i=1}^d \min(N_i,\varepsilon^{-1}) \]
translates of $P_{\varepsilon/2}$. Define $A'$ by removing from $A$ those translates containing at most $\varepsilon |P_{\varepsilon}|$ elements of $A$. Then $A'$ is $\varepsilon$-regular by construction, since any $a \in A'$ lies in a translate of $P_{\varepsilon/2}$ containing at least $\varepsilon |P_{\varepsilon}|$ elements of $A'$, but this translate of $P_{\varepsilon/2}$ is contained in $a + P_{\varepsilon}$.
 
Moreover, since
\[ \varepsilon |P_{\varepsilon}| \leq 10^d \varepsilon \prod_{i=1}^d \max(\varepsilon N_i,1), \]
the number of elements in $A\setminus A'$ is at most
\[ 100^d\varepsilon \prod_{i=1}^d \min(N_i,\varepsilon^{-1}) \max(\varepsilon N_i,1) \leq 100^d\varepsilon |P| \leq \eta |A|,  \]
as desired.
\end{proof}

\begin{proposition}[popular Shapley-Folkman]\label{prop:popular-sf}
Let $\CP = [-N_1,N_1] \times \cdots \times [-N_d,N_d]$ be a box in $\R^d$ for some positive integers $N_1,\cdots,N_d$, and let $P = \CP \cap \Z^d$. Let $A\subseteq P$ be a subset. Let $\CC \subseteq \R^d$ be the convex hull of $A$, and assume that $x_0 + \beta\CP \subseteq \CC$ for some $x_0 \in \Z^d$ and $\beta>0$. Let $\CC' = (1-\gamma)\CC + \gamma x_0$ for some $\gamma \in (0,1/(d+2))$.
 If $A$ is $\varepsilon$-regular for some $\varepsilon \leq \beta\gamma$, then for each positive integer $k > d$ and any element $x \in (k+1)\CC'$, there are at least $\delta |P|$ ways to write $x=y+a$ for some $y \in k\CC'$ and $a \in A$, where $\delta = \delta(d,\varepsilon) > 0$ is a positive constant.
\end{proposition}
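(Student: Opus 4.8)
The plan is to pin down a single, carefully chosen element $a_0\in A$ for which the \emph{entire} small box $a_0+P_\varepsilon$ lies in $x-k\CC'$, and then let the $\varepsilon$-regularity of $A$ do the counting. Granting the containment $a_0+P_\varepsilon\subseteq x-k\CC'$, every $a\in A\cap(a_0+P_\varepsilon)$ satisfies $x-a\in k\CC'$, so $x=(x-a)+a$ is one of the representations we want; since $a_0\in A$ and $A$ is $\varepsilon$-regular there are at least $\varepsilon|P_\varepsilon|$ such $a$, and distinct $a$'s yield distinct representations. A crude comparison of side lengths gives $|P_\varepsilon|\geq(\varepsilon/3)^d|P|$, so this supplies at least $\delta|P|$ representations with $\delta=\varepsilon^{d+1}/3^d=\delta(d,\varepsilon)>0$, as required.

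To find $a_0$, set $c':=x/(k+1)$; then $c'\in\CC'$ because $x\in(k+1)\CC'$ and $\CC'$ is convex. The key remark is that $\CC'$ is the convex hull of $A':=(1-\gamma)A+\gamma x_0$ (the affine map $a\mapsto(1-\gamma)a+\gamma x_0$ commutes with taking convex hulls). Hence, by Carath\'eodory's theorem, $c'$ is a convex combination of at most $d+1$ points of $A'$, so we may write $c'=\sum_{j=0}^{d}\lambda_j a_j'$ with $a_j'\in A'$, $\lambda_j\geq0$, $\sum_j\lambda_j=1$, and (relabelling) $\lambda_0\geq\frac1{d+1}$. Write $a_j'=(1-\gamma)a_j+\gamma x_0$ with $a_j\in A$; the element $a_0\in A$ will be the one we keep. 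Substituting $a_0=\frac{a_0'-\gamma x_0}{1-\gamma}$ and $x=(k+1)\sum_j\lambda_j a_j'$ gives the barycentric identity
\[
\frac{x-a_0}{k}=\frac{(k+1)\lambda_0-\tfrac1{1-\gamma}}{k}\,a_0'+\sum_{j=1}^{d}\frac{(k+1)\lambda_j}{k}\,a_j'+\frac{\gamma}{k(1-\gamma)}\,x_0 .
\]
One checks the coefficients sum to $1$; all of $a_0',\dots,a_d',x_0$ lie in $\CC'$; and the coefficient of $a_0'$ is $\geq0$ \emph{exactly} because $(k+1)\lambda_0\geq\frac{k+1}{d+1}\geq\frac{d+2}{d+1}>\frac1{1-\gamma}$, where the middle inequality uses $k>d$ and the last uses $\gamma<\frac1{d+2}$. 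So $\frac{x-a_0}{k}\in\CC'$, i.e.\ $x-a_0\in k\CC'$.

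It remains to upgrade this to membership with a little room: $a_0+\varepsilon\CP\subseteq x-k\CC'$. From $x_0+\beta\CP\subseteq\CC$ we get $x_0+(1-\gamma)\beta\CP\subseteq\CC'$, and since the convex combination above assigns the positive weight $\frac{\gamma}{k(1-\gamma)}$ to $x_0$, convexity of $\CC'$ yields
\[
\frac{x-a_0}{k}+\frac{\gamma\beta}{k}\,\CP=\frac{x-a_0}{k}+\frac{\gamma}{k(1-\gamma)}\cdot(1-\gamma)\beta\,\CP\subseteq\CC' .
\]
As $\varepsilon\leq\beta\gamma$, this contains $\frac{x-a_0}{k}+\frac{\varepsilon}{k}\CP$; using the symmetry of $\CP$, the inclusion $\frac{x-a_0}{k}+\frac{\varepsilon}{k}\CP\subseteq\CC'$ is precisely $a_0+\varepsilon\CP\subseteq x-k\CC'$. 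Since $P_\varepsilon\subseteq\varepsilon\CP$, the first paragraph now finishes the proof.

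I expect the only real difficulty to be choosing the right way to peel off one element in the second step. Applying the Shapley--Folkman lemma (Lemma~\ref{le:ShapFolk}) to $A'$ would write $x$ as a sum of $k$ elements of $\CC'$ and one element of $A'$, but converting that last element back to an element of $A$ costs an error of size $\asymp\gamma\cdot\mathrm{diam}(\CP)$, too large to absorb into $k\CC'$. Working with Carath\'eodory on the single point $c'$ and tracking barycentric coordinates avoids this: amplifying the weight $\lambda_0$ by $k+1$ lets one remove a \emph{whole} copy of $a_0'=(1-\gamma)a_0+\gamma x_0$, and the leftover $\gamma x_0$ mass is exactly what produces the interior room needed for the $\varepsilon$-regularity step. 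This is also the place where the hypotheses $k>d$ and $\gamma<1/(d+2)$ are used, in the sharpest possible form.
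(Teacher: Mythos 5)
Your proof is correct, and it takes a genuinely different route to the same decomposition. The paper applies the Shapley--Folkman lemma directly: writing $x=(k+1)(1-\gamma)z$ with $z\in\CC$ (after translating $x_0$ to $0$), it splits $x=(k-d-\gamma(k+1))z+(d+1)z$ and uses $(d+1)\CC=d\CC+A$ to peel off $a\in A$, leaving $y=(k-d-\gamma(k+1))z+dw$; the room to wiggle $y$ within $k\CC'$ then comes from the unused $\gamma$-factor, since $(k-d-\gamma(k+1))+d+\gamma=k(1-\gamma)$ and the nonnegativity of $k-d-\gamma(k+1)$ is exactly the same use of $k>d$ and $\gamma<1/(d+2)$ as in your argument. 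You instead apply Carath\'eodory to $x/(k+1)\in\CC'$, pick the largest-weight vertex $a_0'$ of $A'$, and unwind $a_0'=(1-\gamma)a_0+\gamma x_0$ by hand, tracking barycentric coordinates; the residual positive weight $\gamma/(k(1-\gamma))$ on $x_0$ is what gives you the $\varepsilon\CP$-room. These are two implementations of the same idea --- indeed Shapley--Folkman with $k=d+1$ is essentially Carath\'eodory --- but your version is more self-contained (it avoids invoking Lemma~\ref{le:ShapFolk}) while the paper's is slightly shorter. One small correction to your closing remark: your concern that ``Shapley--Folkman on $A'$ would incur a $\gamma\cdot\mathrm{diam}(\CP)$ error'' is a valid objection to the naive approach, but the paper does not run into it because it applies Shapley--Folkman to $A$ and $\CC$, not to $A'$ and $\CC'$; the $\gamma$-slack is budgeted into the coefficient of $z$ rather than absorbed after the fact.
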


\begin{proof}
By translation we may assume that $x_0 = 0$, so that $\beta\CP \subseteq \CC$ and $\CC' = (1-\gamma)\CC$. Let $x \in (k+1)\CC'$ be for some $k > d$. Write $x = (k+1)(1-\gamma)z$ for some $z \in \CC$. Note that
\[ x = (k-d-\gamma(k+1))z + (d+1)z = (k-d-\gamma(k+1))z + dw+ a \]
for some $a \in A$ and $w \in \CC$, by the Shapley-Folkman theorem (Lemma~\ref{le:ShapFolk}). We thus get a solution $x = y+a$ with
\[ y = (k-d-\gamma(k+1))z + dw. \]
We claim that 
\begin{equation}
\label{eq:yPepsinclclaim}
y+ P_{\varepsilon} \subset k\CC'.
\end{equation}
If this claim is true, then any $t \in P_{\varepsilon}$ with $a-t \in A$ gives rise to a representation
\[ x = (y + t) + (a - t) \]
with $y + t \in k\CC'$. By the $\varepsilon$-regularity of $A$, there are at least $\varepsilon |P_{\varepsilon}|$ such elements $t$, leading to at least
\[ \varepsilon |P_{\varepsilon}| \geq \varepsilon^{d+1} N_1\cdots N_d \geq \delta |P| \]
representations for some constant $\delta > 0$ depending on $d$ and $\varepsilon$, as desired.

To prove~\eqref{eq:yPepsinclclaim}, take any $t \in \varepsilon\CP$. Since $\beta\CP \subseteq \CC$ and $\varepsilon \leq \beta\gamma$, we have $\varepsilon\CP \subseteq \gamma\CC$, and thus $t \in \gamma\CC$. Hence
\[ y+t \in (k-d-\gamma(k+1))\CC + d\CC + \gamma\CC \subseteq k(1-\gamma)\CC = k\CC', \]
as desired.
\end{proof}


Now we are finally ready to prove Proposition~\ref{prop:pop-rem-largek}.
\begin{proof}[Proof of Proposition \ref{prop:pop-rem-largek}]
Let $\beta = \min_{d' \leq d} \beta(\alpha\eta/8,d')>0$, where $\beta(\cdot, \cdot)$ is from Lemma \ref{lem:contain-box}, and let $\gamma = \min\{1/(d+2), \min_{d' \leq d} \gamma(d', \eta/4)\} > 0$, where $\gamma(\cdot, \cdot)$ is from Lemma~\ref{lem:small-boundary}.  Let $N_0$ be the maximum of the constants $\max_{d' \leq d} N_0(\alpha\eta/8, d')$ from Lemma~\ref{lem:contain-box} and $\max_{d' \leq d} N_0( d', \beta, \eta/4)$ from Lemma~\ref{lem:small-boundary}.

Without loss of generality we may assume that $N_1,\cdots,N_{d'} \geq N_0$ and $N_{d'+1},\cdots,N_d < N_0$, for some $0 \leq d' \leq d$. We may also assume that $d'>0$ since otherwise $|A|$ is bounded and the conclusion is trivial. Let $\CP' = [-N_1,N_1]\times \cdot \times [-N_{d'},N_{d'}]$, all of whose side lengths are at least $2N_0 + 1$, and let $P' = \CP' \cap \Z^{d'}$. We can partition $P$ into $J \leq (2N_0+1)^{d-d'}$ smaller boxes $P_1,\cdots,P_J$, with $P_j = P' \times \{t_j\}$ for some $t_j \in \Z^{d-d'}$. For each $1 \leq j \leq J$, let $A_j \subset P'$ be the set of $a \in P'$ with $(a,t_j) \in A$. Let $\CJ$ be the set of indices $j$ with $|A_j| \geq (\eta\alpha/4) |P'|$.

Let $\varepsilon \in (0, \beta\gamma)$ be small enough depending on $\alpha,d,\eta$. For each $j \in \CJ$, Lemma \ref{lem:regularize} applied to $A_j \subseteq P'$ implies that there is an $\varepsilon$-regular subset $B_j \subseteq A_j$ with $|A_j\setminus B_j| \leq (\eta/4) |A_j|$. Let $\CC_j \subseteq \R^{d'}$ be the convex hull of $B_j$. Lemma \ref{lem:contain-box} applied to $B_j$ and $\CC_j$ implies that $x_j + \beta\CP' \subset \CC_j$ for some $x_j \in \Z^{d'}$.

For each $j \in \CJ$, we may thus apply Proposition~\ref{prop:popular-sf} to  $B_j$ and $\CC_j$ to define $\CC_j'$ and conclude that for any $k>d$ and $x \in (k+1)\CC_j'$, there are at least $\delta' |P'|$ ways to write $x = y+b$ for some $y \in k\CC_j'$ and $b \in B_j$, where $\delta' = \delta'(d,\varepsilon) > 0$ is a constant. This number of representations is at least $\delta |A|$ for $\delta = \delta'/J$.

Now for $j\in \CJ$ we write $A_j' = (B_j \cap \CC_j') \times \{t_j\}$ and let $A' = \cup_{j \in \CJ} A_j'$. To finish the proof, we show that the conclusion of Proposition~\ref{prop:pop-rem-largek} holds with this choice of $A'$ and $\ell = d(2N_0+1)^d \geq dJ$. Indeed, let $k > \ell$ and $x \in kA'$ be arbitrary. Assume that
\[ x = \sum_{j \in \CJ} (x_j, k_jt_j), \]
where $x_j \in k_j(B_j \cap \CC_j')$ and $\sum k_j = k$. We may choose $\ell_j \in [\min\{d,k_j\}, k_j]$ for $j \in \CJ$ such that $\sum \ell_j = \ell$. For those $j \in \CJ$ with $k_j > d$, by iterating the output of Proposition~\ref{prop:popular-sf} we see that the number of ways to write
 $ x_j = y_j + b_{j,1} + \cdots + b_{j,k_j-\ell_j}$ with $y_j \in \ell_j \CC_j'$ and $b_{j,1},\cdots,b_{j,k_j-\ell_j} \in B_j$ is at least $(\delta |A|)^{k_j - \ell_j}$. For those $j$ with $k_j \leq d$, we necessarily have $\ell_j = k_j$ and the statement above holds also. Hence we obtained at least $(\delta |A|)^{k-\ell}$ representations
\[ x = \sum_{j \in \CJ} (y_j, \ell_jt_j) + \sum_{j \in \CJ} \sum_{1 \leq i \leq k_j - \ell_j} (b_{j,i}, t_j) \]
of the desired form, since $\sum_{j \in \CJ} (y_j, \ell_jt_j) \in \ell P$ and each $(b_{j,i},t_j) \in A$.

To show that $A\setminus A'$ is small, observe from our constructions that
\[ |A\setminus A'| \leq \sum_{j\notin\CJ} |A_j| + \sum_{j\in\CJ} |A_j\setminus B_j| + \sum_{j\in\CJ} |(\CC_j \setminus \CC_j') \cap \mathbb{Z}^d|. \]
By the definition of $\CJ$, the first sum above is bounded by $(\eta\alpha/4)|P| \leq (\eta/4)|A|$. By the construction of $B_j$ from regularization and Lemma \ref{lem:small-boundary}, both the second and the third sums above are bounded by $\sum_{j\in\CJ} (\eta/4) |A_j| \leq (\eta/4)|A|$. This shows that $|A\setminus A'| \leq \eta |A|$, completing the proof.
\end{proof}

As shown in Section~\ref{sec:fill}, Proposition~\ref{prop:pop-rem-largek} implies Theorem~\ref{thm:pop-rem-largek}. Hence, as shown in Section~\ref{sec:A*outline}, this finishes the proof of Hypothesis A*. As shown in Section~\ref{sec:initred} this implies Hypothesis A and Theorem~\ref{th:MT} except for the last claims concerning the case $u$ is fixed.

\section{Case $u$ is fixed and $v \geq 1000u^2/\lambda^2$ of Hypothesis A}
\label{sec:ufixed}
In this section we deduce the last claim of Hypothesis A from the first part of Hypothesis A and the arithmetic removal lemma. Note that by Hypothesis A*, we know that the first part of Hypothesis A actually holds for $N \geq (30v/\lambda)^2$.

We can assume that $N$ is large enough depending on $u$ and $\lambda$, since otherwise the claim follows trivially from the  discrete Bleichenbacher theorem (Proposition~\ref{prop:DiscBlei}). Notice first that we can assume that, for every $u' \in [1, e^{-1/u}v]$, one has
\begin{equation}
\label{eq:u=1basicass}
\sum_{\substack{a \in A \\ N/e^{-1/u}v < a \leq N/u'}} \frac{1}{a} < \frac{1+\lambda/2}{u'},
\end{equation}
since otherwise the claim follows immediately from the first part of Hypothesis A. Notice also that
\begin{equation}
\label{eq:largeelA}
\sum_{\substack{(1-\frac{\lambda}{8})N < a \leq N/u}} \frac{1}{a} \leq \frac{\lambda}{4u}.
\end{equation}
Indeed, this is trivially true if $u \geq (1-\lambda/8)^{-1}$, and if $u \leq (1-\lambda/8)^{-1} \leq 8/7$, then each summand is at most $8/(7N)$ and there are at most $\lambda N/8 + 1$ summands. Using these we obtain that
\[
\begin{split}
\sum_{\substack{a \in A \\ \frac{\lambda}{8u} N < a \leq (1-\frac{\lambda}{8})N}} \frac{1}{a} &\geq \sum_{\substack{a \in A \\ N/v < a \leq N/u}} \frac{1}{a} - \sum_{\substack{N/v < a \leq N/e^{-1/u}v}} \frac{1}{a} - \sum_{\substack{a \in A \\ N/e^{-1/u}v < a \leq \frac{\lambda}{8u} N}} \frac{1}{a} - \sum_{\substack{(1-\frac{\lambda}{8})N < a \leq N/u}} \frac{1}{a} \\
&\geq \frac{1+\lambda}{u} - \frac{1+\lambda/8}{u} - \frac{\lambda}{4u} - \frac{\lambda}{4u} \geq  \frac{\lambda}{4u},
\end{split}
\]
where we used \eqref{eq:u=1basicass} to bound the third sum. This implies that 
\begin{equation}\label{eq:largeA'}
|A'| := \left|A \cap \left[\frac{\lambda}{8u} N < a \leq \left(1-\frac{\lambda}{8}\right)N\right]\right| \geq \frac{\lambda}{8 u} N \cdot \frac{\lambda}{4u} \gg N
\end{equation}
since $\lambda$ and $u$ are fixed. Then, by the removal lemma for popular sums (Theorem~\ref{thm:pop-rem}), there exist $\delta = \delta(\lambda)$ and $A'' \subseteq A'$ such that $|A''| \geq (1-\lambda^2/(1000u^2))|A'|$ and, for all positive integers $k \leq 8u/\lambda$, $r_{kA'}(n) \geq \delta N^{k-1}$ for every $n  \in kA''$.

Assume first that for some $k_0 \in \{1, \dotsc, \lfloor 8u/\lambda \rfloor\}$,
\begin{equation}
\label{eq:B'bound}
|B| := |k_0 A'' \cap [0.65 N, (1-\lambda/40)N]| \geq \frac{\lambda^3}{10000 u^3} N.
\end{equation}
Let $b \in B$. Writing $N' = N-b$, $v' = v(N-b)/N$ and $u' = e^{-1/u} v(N-b)/N$, we have, by~\eqref{eq:u=1basicass},
\[
\begin{split}
\sum_{\substack{a \in A \\ N'/v' < a \leq N'/u'}} \frac{1}{a}  &= \sum_{\substack{a \in A \\ N/v < a \leq N/e^{-1/u}v}} \frac{1}{a} \geq  \sum_{\substack{a \in A \\ N/v < a \leq N/u}} \frac{1}{a} -  \sum_{\substack{a \in A \\ N/e^{-1/u}v < a \leq N/u}} \frac{1}{a} \\
&\geq \frac{1+\lambda}{u} - \frac{1+\lambda/2}{u} = \frac{\lambda}{2u} \geq \frac{2}{u'}
\end{split}
\]
since $v \geq 1000u^2/\lambda^2$.

Note also that $N' = Nv'/v \geq (100v/\lambda)^2 v'/v \geq 30v'^2/\lambda^2$ since $v \geq v'$. Hence we can apply the first part of Hypothesis A with $N', u', v'$ obtaining that there exists $k_1 \leq v' \leq 0.35 v$ such that
\[
|\{(a_1,\dots,a_{k_1})\in A^{k_1} \colon N-b-k_1 \leq a_1+\dotsb+a_{k_1} \leq N-b\}| \ge \alpha_{v'}' \frac{|A|^{{k_1}}}{N},
\]
where $\alpha_{v'}' > 0$ is a constant with $\alpha_{v}' = v^{-o(v)}$ as $v \to \infty$. Write $k = k_1 + k_0 \leq 0.35v + 8u/\lambda \leq e^{-1/u} v$. We get that
\[
\begin{split}
&|\{(a_1,\dots,a_k)\in A^k \colon N-k \leq a_1+\dotsb+a_k \leq N\}|\\
& \ge \sum_{b \in B} r_{k_0 A'}(b) \cdot |\{(a_1,\dots,a_{k_1})\in A^{k_1} \colon N-b-k_1 \leq a_1+\dotsb+a_{k_1} \leq N-b\}| \\
&\geq \frac{\lambda^3}{10000u^3}N \cdot \delta N^{k_0-1} \cdot \alpha'_{v'} \frac{|A|^{k_1}}{N} \geq \alpha_v \frac{|A|^{k}}{N},
\end{split}
\]
where $\alpha_v = \frac{\lambda^3}{10000u^3} \cdot \delta \alpha'_{v'} = v^{-o_\lambda(v)}$ as $v \to \infty$.

Consider now the case that \eqref{eq:B'bound} does not hold for any $k_0 \leq 8u/\lambda$. Write 
\[
D = A'' \cap \left[\left(\frac{1}{2}-\frac{\lambda}{80}\right)N, 0.65 N\right].
\]
Then $|A'' \setminus D| \leq \frac{\lambda^2}{1250u^2} N$ since every $a \in A'' \setminus D$ produces some $k_0a$ counted in~\eqref{eq:B'bound} for some $k_0 \leq 8u/\lambda$. Since $|A''| \geq |A'|/2 \geq \frac{\lambda^2}{64u^2} N$ by~\eqref{eq:largeA'}, we have
\begin{equation}\label{eq:largeD} 
|D| \geq \frac{\lambda^2}{64u^2} N - \frac{\lambda^2}{1250u^2} N \geq \frac{\lambda^2}{100u^2} N. 
\end{equation}
In particular $D$ is non-empty. Since $A'' \subseteq (N/v, N/u]$, we must have $u \leq 2.1$. From the bound
\[ |A' \setminus D| \leq |A'' \setminus D| + |A' \setminus A''| \leq \frac{\lambda^2 N}{500u^2}, \]
together with~\eqref{eq:largeelA} we get
\[
\begin{split}
\sum_{\substack{a \in A \\  \frac{\lambda}{8u} N < a \leq N/u}} \frac{1}{a} &\leq \frac{8u}{\lambda N} |A' \setminus D| + \sum_{\substack{(1-\frac{\lambda}{8}) N < a \leq N/u}} \frac{1}{a} + \sum_{(\frac{1}{2}-\frac{\lambda}{80})N \leq a \leq 0.65 N} \frac{1}{a} \\
&\leq \frac{\lambda}{50u} + \frac{\lambda}{4u} + \log \frac{0.65}{1/2-\lambda/80} + O(1/N) < 0.3 + 0.27 \cdot \frac{\lambda}{u}.
\end{split}
\]

Let $d \in D$ and write now $N' = N-d, u' = \frac{8u}{\lambda} \cdot \frac{N-d}{N}$ and $v' = 1.9 v e^{-1/u} \frac{N-d}{N}$. Then
\[
\begin{split}
\sum_{\substack{a \in A \\ N'/v' < a \leq N'/u'}} \frac{1}{a} &= \sum_{\substack{a \in A \\ N/(1.9 v e^{-1/u}) < a \leq \lambda N/(8u)}} \frac{1}{a} \\
& \geq \sum_{\substack{a \in A \\ N/v < a \leq N/u}} \frac{1}{a}  - \sum_{\substack{N/v < a \leq \max\{N/v, N/(1.9 v e^{-1/u})\}}} \frac{1}{a}  - \sum_{\substack{a \in A \\ \frac{\lambda}{8u} N \leq a \leq N/u}} \frac{1}{a} \\
& \geq \frac{1+\lambda}{u} + \min\{0, \log (1.9 e^{-1/u})\} - O(1/N) - 0.3 - 0.27 \cdot \frac{\lambda}{u} \\
&\geq \min\{\log 1.9, 1/u\} - 0.32 + 0.73 \cdot \frac{\lambda}{u} \geq 0.73 \cdot \frac{\lambda}{u} \geq  \frac{1.4}{u'}
\end{split}
\]
where we used $u \leq 2.1$ and $u' \geq 2u/\lambda$ in the last two steps. Note also that $N' = Nv'/(1.9 v e^{-1/u}) \geq (100v/\lambda)^2 v'/(2v) \geq (30v'/\lambda)^2$ since $v \geq v'$. Hence we can now apply the first part of Hypothesis A with $N', u'$ and $v'$, to obtain $k_1 \leq v'$ such that
\[
|\{(a_1,\dots,a_{k_1})\in A^{k_1} \colon N-d-k_1 \leq a_1+\dotsb+a_{k_1} \leq N-d\}| \ge \alpha_{v'}' \frac{|A|^{{k_1}}}{N},
\]
where $\alpha_{v'}' > 0$ is a constant with $\alpha_v' = v^{-o(v)}$ as $v \rightarrow \infty$. Write $k = k_1 + 1 \leq v' + 1 \leq 0.98e^{-1/u}v + 1 \leq e^{-1/u}v$. We then get
\[
\begin{split}
&|\{(a_1,\dots,a_k)\in A^k \colon N-k \leq a_1+\dotsb+a_k \leq N\}|\\
& \ge \sum_{d \in D}  |\{(a_1,\dots,a_{k_1})\in A^{k_1} \colon N-d-k_1 \leq a_1+\dotsb+a_{k_1} \leq N-d\}| \\
&\geq \frac{\lambda^2}{100u^2}N \cdot  \alpha'_{v'} \frac{|A|^{k_1}}{N} \geq \alpha_v \frac{|A|^{k}}{N},
\end{split}
\]
where $\alpha_v = \frac{\lambda^2}{100u^2} \cdot \alpha_{v'}' = v^{-o_{\lambda}(v)}$ as $v \rightarrow \infty$.

\bibliographystyle{plain}
\bibliography{sieve-arxiv}{}

\end{document}